\DeclareRobustCommand*\cal{\@fontswitch\relax\mathcal}
\theoremstyle{definition}
\newtheorem{definition}{Definition}[]
\newtheorem{assumption}{Assumption}[]
\newtheorem{proposition}{Proposition}[]
\newtheorem{theorem}{Theorem}[]
\newtheorem{lemma}[]{Lemma}
\DeclareMathOperator{\E}{\mathbb{E}}
\DeclareMathOperator*{\argmax}{arg\,max}
\def\E{{\mathbb E}}
\newcommand{\argmin}{\arg\!\min}
\def\S{{\cal S}}
\def\T{{\cal T}}
\def\Xs{{\cal X}}
\def\R{{\cal R}}
\def\N{{\cal N}}
\let\oldref\ref
\renewcommand{\ref}[1]{(\oldref{#1})}
\newcommand{\RNum}[1]{\uppercase\expandafter{\romannumeral #1\relax}}
\newcolumntype{C}{>{\centering\arraybackslash}b{\widthof{positions}}}
\newcolumntype{d}{D{.}{.}{-2}}
\renewcommand{\fnum@figure}{Fig.~\thefigure}
\def\E{{\mathbb E}}
\def\S{{\cal S}}
\def\T{{\cal T}}
\def\Xs{{\cal X}}
\def\R{{\cal R}}
\def\N{{\cal N}}
\DeclareRobustCommand*\cal{\@fontswitch\relax\mathcal}
\title{\vspace{-8mm}\textbf{Randomized Greedy Methods for Weak Submodular Sensor Selection with Robustness Considerations}}
\date{}
\author{Ege C. Kaya, Michael Hibbard, Takashi Tanaka, \\ Ufuk Topcu, Abolfazl Hashemi\thanks{Ege C. Kaya and Abolfazl Hashemi are with the Elmore Family School of Electrical and Computer Engineering, Purdue University, West Lafayette, IN, USA. Michael Hibbard, Takashi Tanaka and Ufuk Topcu are with the Department of Aerospace Engineering and Engineering Mechanics, University of Texas at Austin, Austin, TX, USA. This work was presented in part at the 2023 American Control Conference (ACC) \cite{hibbard_hashemi_tanaka_topcu_2023}. The full work was published in Automatica, 2025 \cite{kaya2025randomized}. This work is supported in part by DARPA grant HR00112220025.}}
\begin{document}
\maketitle
\begin{abstract}
We study a pair of budget- and performance-constrained weak-submodular maximization problems. For computational efficiency, we explore the use of stochastic greedy algorithms which limit the search space via random sampling instead of the standard greedy procedure which explores the entire feasible search space. We propose a pair of stochastic greedy algorithms, namely, \textsc{Modified Randomized Greedy (MRG)} and \textsc{Dual Randomized Greedy (DRG)} to approximately solve the budget- and performance-constrained problems, respectively. For both algorithms, we derive approximation guarantees that hold with high probability. We then examine the use of DRG in robust optimization problems wherein the objective is to maximize the worst-case of a number of weak submodular objectives and propose the \textsc{Randomized Weak Submodular Saturation Algorithm (Random-WSSA)}. We further derive a high-probability guarantee for when \textsc{Random-WSSA} successfully constructs a robust solution. Finally, we showcase the effectiveness of these algorithms in a variety of relevant uses within the context of Earth-observing low Earth orbit satellite constellations which estimate atmospheric weather conditions and provide Earth coverage.
\\\\\noindent
\textbf{Keywords:} aerospace, decision making and autonomy, sensor-data fusion, distributed optimization for large-scale systems, large scale optimization problems, modeling and decision making in complex systems, probabilistic robustness, robustness analysis.
\end{abstract}
\section{Introduction}\label{sec:intro}
In sensor selection problems, especially those involving an extensive network of sensors, one often needs to select an optimal subset for a given task rather than using all available due to restrictions on resources.
Such restrictions, usually modeled in terms of constraints on the cardinality or the cost of feasible solutions, introduce the need to design principled decision-making algorithms for the creation of task-optimal selections of sensors that deliver high performance while adhering to the inherent resource, performance and robustness constraints.
These problems occur naturally in many areas such as power system monitoring~\cite{damavandi2015robust,liu2016towards}, economics and algorithmic game theory \cite{cardinality,revenue,gametheory}, and finance~\cite{attigeri2019feature}. In this work, we motivate the problem setting by considering low Earth orbit (LEO) constellations of Earth-observing satellites~\cite{richards2001distributed}.


The problem of choosing the optimal set of sensors is a computationally challenging combinatorial optimization that is known to be NP-hard~\cite{williamson2011design}.
Because of this complexity and theoretical barrier, a considerable amount of research has delved into creating heuristic algorithms aimed at providing approximate solutions for the problem.

Particularly well-known is the scenario where the objective can be formulated using a function that is both monotone nondecreasing and submodular, i.e., a set function that has the familiar property of demonstrating \textit{diminishing marginal returns}. When sensor selection is made in the presence of a cardinality constraint on the number of sensors, a proven result is that the \textsc{Greedy} algorithm achieves an approximation factor of $(1-1\slash e)$ in comparison to the optimal solution \cite{nemhauser1978analysis}.
Specifically, at each iteration, the \textsc{Greedy} algorithm considers each element in the set of remaining sensors, evaluates them in terms of marginal gain, and adds the element with the highest marginal gain to the set of selected elements.

Modified versions of \textsc{Greedy} have been developed for addressing sensor selection problems with the more general case of budget constraints~\cite{wolsey1982analysis}. In this case, each sensor incurs a cost, and the decision-maker is restricted by a predetermined budget for their selections. The dual of this problem, where the objective is to identify the most cost-effective set of sensors to meet a specific performance constraint, has also been considered~\cite{khuller1999budgeted}.


In situations involving a large number of sensors, the computational demands of \textsc{Greedy} become impractical due to the necessity of evaluating the marginal return of every remaining sensor at each iteration.
In~\cite{mirzasoleiman2015lazier}, a computationally cheaper alternative is proposed, often referred to as the \textsc{Stochastic} or \textsc{Randomized Greedy} algorithm in the literature. This approach limits the function evaluations to a randomly sampled subset of the remaining sensors during each iteration and enjoys theoretical assurances concerning the expected performance of the constructed solution relative to the optimal solution.
Expanding upon this randomized algorithm,~\cite{hashemi2020randomized} accounts for situations where the performance objective is defined by a weak submodular function, which is a function that is either truly submodular or whose maximum violation of submodularity is bounded.
Moreover, the authors derive a theoretical guarantee that holds with high probability for the performance of the randomized greedy algorithm in the weak submodular regime. However, it is important to note that the setting they consider is restricted solely to the cardinality-constrained case and does not take into account the nontrivial generalization of this setting to one involving the presence of a budget constraint and nonuniform selection costs.


In this work, we draw inspiration from the previously aforementioned work to propose a pair of novel extensions to the \textsc{Randomized Greedy} algorithm for use in budget- and performance-constrained problems, namely, the \textsc{Modified Randomized Greedy} (MRG) and \textsc{Dual Randomized Greedy} (DRG) algorithms, respectively.
For both cases, we assume that the performance objective is characterized by a weak submodular function and that the sensor costs are additive and possibly nonuniform.
To the best of our knowledge, the use of randomized greedy algorithms in these two scenarios is a novel approach. 
In the ensuing discussion, we further derive theoretical guarantees that hold with high probability on the approximation factors for both algorithms.
As expected, in the case where the performance objectives are not weak submodular but truly submodular, and where the standard greedy procedure is used instead of a randomized greedy approach, both of the derived theoretical guarantees reduce to the standard results proven in the literature.

We further examine the use of the proposed DRG algorithm in the context of robust multi-objective submodular maximization where the robustness is captured by maximizing the value of the worst objective. This notion of worst-case robustness extends the setting explored in \cite{krause2014submodular} which proposes the \textsc{Submodular Saturation Algorithm} (SSA) in the presence of multiple monotone nondecreasing submodular objective functions. We propose a modified version of the algorithm, which we name \textsc{Randomized Weak Submodular Saturation Algorithm (Random-WSSA)}, and likewise derive a theoretical guarantee on the performance of the algorithm that holds with high probability.



The remainder of the paper is organized as follows:
In Section~\ref{sec:motiv}, we motivate our work in the context of LEO satellite constellations while in Section~\ref{sec:back} we provide the required preliminary information used in the rest of the work.
In Section~\ref{sec:mart}, we introduce the MRG and the DRG algorithms and derive lower bounds on their approximation ratios.
In Section~\ref{sec:robust}, we introduce \textsc{Random-WSSA} for the multi-objective robustness problem and provide theoretical guarantees on its success.
Finally, in Section~\ref{sec:examples}, we corroborate our theoretical results by providing numerical examples demonstrating the efficacy of the three proposed algorithms for sensor selection in Earth-observation tasks, as well as in scenarios involving multi-objective robustness.
\section{Motivation: Satellite Sensing Networks}\label{sec:motiv}
We start by presenting an illustrative motivating example within the field of LEO satellite constellations.
%
Over the past few years, there has been a growing interest in satellite mission designs to use a cluster of simpler satellites rather than relying on a single intricate satellite. This trend has made the use of CubeSat-based designs~\cite{poghosyan2017cubesat} commonplace, such as the constellation depicted in Figure~\ref{fig:motivating_example}.
Constellations built on such concepts provide mission designers with various advantages. These advantages include cost savings in the design and launch expenses~\cite{wagner2021distributed}, increased redundancy to address potential failures, and improved temporal resolution in observations due to more frequent revisits~\cite{cahoy2017initial}.
Some examples of missions that have achieved success with this design philosophy are the NASA TROPICS mission~\cite{blackwell2018overview} and the Planet Labs Flock Constellation~\cite{boshuizen2014results}.
The NASA TROPICS mission had as its primary objective to acquire temporally dense observations of weather phenomena, while the objective of the Flock Constellation was high-resolution imaging of the Earth's surface.
Proposed uses for CubeSat-based missions include many types of Earth science-related applications~\cite{selva2012survey,poghosyan2017cubesat}, as well as disaster management, natural hazard response~\cite{mandl2016hyperspectral,santilli2018cubesat} and air traffic monitoring~\cite{nag2016cubesat,wu2021multiple}.

Naturally, as the number of LEO constellations, as well as the number of satellites in the constellation increases, more human operators are needed for their supervision and operation.
One frequent mode of operation may involve the selection of only a subset of the satellites to use for sensing at each time step, suited to the needs of some specified task. 
Adhering to the assumption that the number of human operators required for the operation of such constellations is linearly correlated with the number of satellites in it~\cite{siewert1995system}, this human-operated approach may rapidly become cumbersome. Especially under emergency circumstances where quick decision-making is of great importance, having the selection decisions made by reliable automated methods could be much more effective than trusting it with human agents. Imagine the scenario of a sudden forest fire, which creates the need for the quick selection of an optimal set of satellites to image the affected site. In such a scenario, automation may mean the difference between a quick intervention and a belated one.
Hence, one motivating factor behind this work is to automate the process of selection by developing submodular optimization algorithms that efficiently select suitable subsets autonomously, with provable guarantees demonstrating that the produced solution will be trustworthy, robust, and replicable. 

\begin{figure}
    \centering
    \scalebox{0.9}{
    \includegraphics{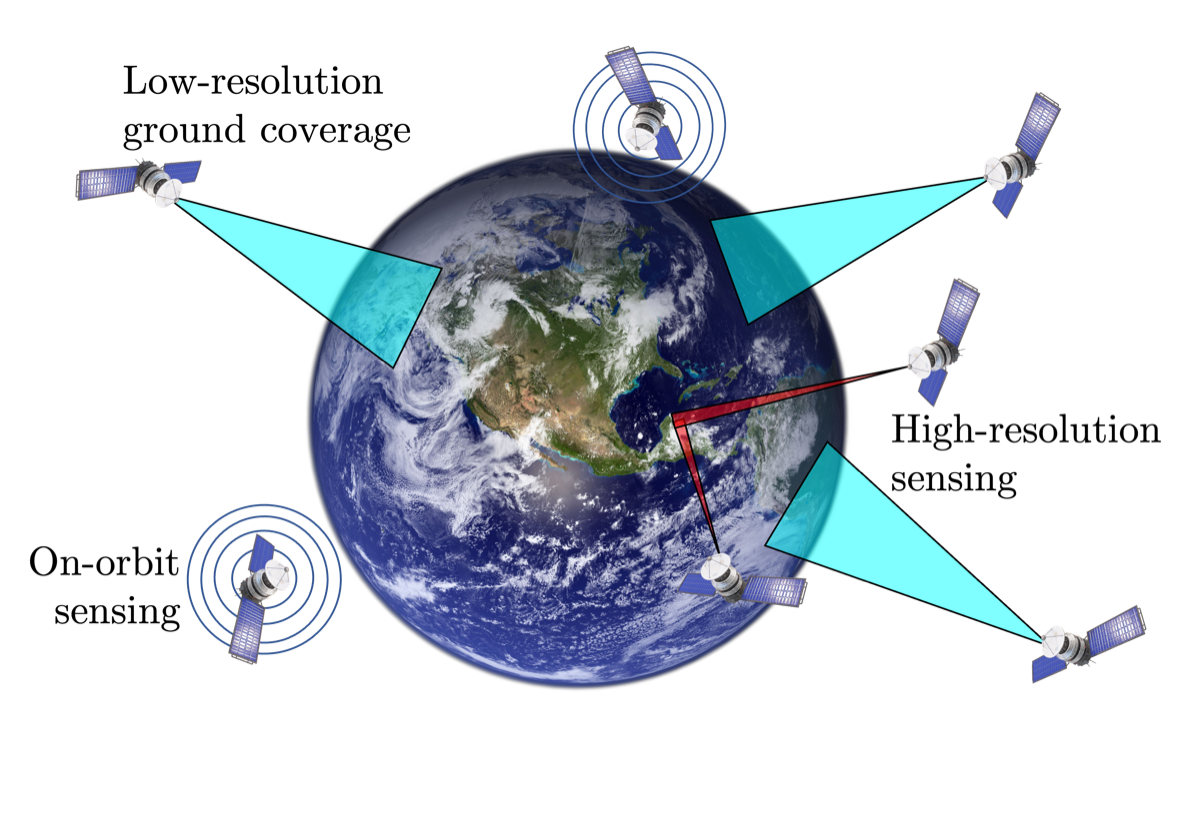}}
    \caption{It is often the case that only some of the satellites in a constellation are actively observing Earth at any given time. Furthermore, these satellites may be leveraged for multi-task missions.}
    \label{fig:motivating_example}
\end{figure}
\section{Preliminaries and Background}\label{sec:back}
In this section, we overview a number of relevant mathematical concepts for the ensuing algorithmic and theoretical developments.

\subsection{Notation} 

We denote the set of nonnegative real numbers by $\mathbb{R}_{\geq 0}$, finite sets using calligraphic notation $\mathcal{S}$, and the $n$-dimensional identity matrix by $I_{n}$.
For a finite set $\mathcal{S}$, we denote its cardinality by $|\mathcal{S}|$ and its power set by $2^{\mathcal{S}}$.
Finally, we compactly express $[n] := \{1,\ldots,n\}$.

\subsection{Submodular Maximization}

In this section, we provide an overview of submodular maximization. We begin with the following definitions.

\begin{definition}[Monotone nondecreasing set functions]
A set function $f\vcentcolon 2^\N \rightarrow \mathbb{R}$ is \textit{monotone nondecreasing} if for every $\S\subseteq \mathcal{T} \subseteq \N,$ we have $f(\S) \leq f(\mathcal{T}).$
\end{definition}
\begin{definition}[Submodularity]\label{def:submod}
A set function $f:2^\N\rightarrow \mathbb{R}$ is submodular if 
\begin{equation}
f(\S\cup \{j\})-f(\S) \geq f(\T\cup \{j\})-f(\T)
\end{equation}
for all subsets $\S\subseteq \T\subset \N$ and $j\in \N\setminus \T$. We also write $f_j(\S) \vcentcolon= f(\S\cup \{j\})-f(\S)$ for the marginal return of adding  $j$ to  $\S$.
\end{definition}

%
In the practical sense, the function $f$ is a performance objective (e.g., the additive inverse of the mean square error (MSE) \cite{hashemi2017randomized,hashemi2020randomized}) and $\N$ is the set of all sensors, out of which suitable selection should be made, commonly referred to as the \textit{ground set.}
%
In this paper, we consider the setting where each sensor $j \in \N$ has a cost of selection $c_{j} \in \mathbb{R}_{\geq 0}$, and the total cost of a selection $\S$ is denoted by $c(\S) \vcentcolon= \sum_{j \in \S} c_{j}$. Note that when $c_j=1$ for all $j\in \N$, we have $c(\S) = |\S|.$
Let $\mathcal{C} \vcentcolon= \{ c_{i} \, : \, i \in [|\N|] \}$ and define the ordered set $\Bar{\mathcal{C}} \vcentcolon= \{ \Bar{c}_{i} \in \mathcal{C} \, : \, \Bar{c}_{i} \leq \Bar{c}_{j} \,\, \forall \, j \geq i \}$ such that $\Bar{\mathcal{C}} = \mathcal{C}$.

For a monotone nondecreasing submodular function $f$ which is, in addition, normalized (i.e., $f(\emptyset)=0$), we are interested in solving the following budget-constrained combinatorial optimization problem:
\begin{equation}\label{eq:fmax}
\begin{gathered}
\max_{\S\subseteq\N} f(\S)\\
\text{s.t.}\;  c(\S)\leq B,
\end{gathered}
\end{equation}
where $B$ denotes the budget bound. 
By a reduction to the well-known set cover problem, \eqref{eq:fmax} is known to be NP-hard \cite{feige1998threshold,williamson2011design}. Note that when $c_j=1$ for all $j\in \N$, \eqref{eq:fmax} reduces to the cardinality-constrained observation selection problem studied in \cite{hashemi2017randomized,hashemi2020randomized}.
In this special case, the simple \textsc{Greedy} algorithm that iteratively selects the remaining element with the highest marginal gain
\begin{equation}
j^\ast \in \argmax_{j \in \N\setminus \S^{(i-1)}} f_j(\S^{(i-1)}),
\end{equation}
where $\S^{(i-1)}$ is the subset selected after the $(i-1)$th iteration, satisfies the optimal worst-case approximation ratio $f(\S) \geq \left(1-1/e\right) f(\S^\star)$, in which $\S$ is the subset selected by \textsc{Greedy}, and $\mathcal{S}^{*}$ is an optimal solution of~\eqref{eq:fmax}~\cite{nemhauser1978analysis}.
%
%
The simple extension of this result to the general, budget-constrained case is done through the \textsc{Modified Greedy} (MG) algorithm, which
instead selects the remaining element with the highest the marginal-gain-to-cost ratio $f_{j}(\S^{(i-1)})\slash c_{j}$
and adds it to the constructed solution as long as this addition does not violate the budget constraint.
Reminiscent of the well-known result for the \textsc{Greedy} algorithm, MG is shown to achieve a $\frac{1}{2}(1-1\slash e)$ worst-case approximation ratio~\cite{khuller1999budgeted}.
%

\subsection{Weak Submodularity}
In many sensor selection tasks, it is observed that $f$ is not strictly submodular but shows similar behavior to submodularity under certain conditions.
%
%
We will call such functions {\it weak submodular}, and capture their violation of submodularity through the quantity we will call the \textit{weak-submodularity constant}~\cite{zhang2016submodular,chamon2017approximate,hashemi2019submodular}.
\begin{definition}[Weak-submodularity constant (WSC)]\label{def:weak-sub}
The  WSC of a monotone nondecreasing set function $f$ is defined as
 \begin{equation}\label{eq:weak-sub}
 w_{f} \vcentcolon= {\max_{(\S,\T,j)\in \tilde{\N}}\frac{f_j(\T)}{ f_j(\S)}},
 \end{equation}
where $\tilde{\N} = \{(\S,\T,j):\S \subseteq \T \subset \N, j\in \N \setminus \T\}$. 
\end{definition}
Intuitively, the WSC of $f$ can be thought of as a measure of quantifying the maximum violation of the marginal diminishing returns property of submodularity. 
%
Note that with this definition, we will call any set function $f$ weak submodular if it is monotone nondecreasing and its WSC is bounded. Further note that by Definition \ref{def:submod}, a set function $f$ is submodular if and only if its WSC satisfies 
$w_{f} \le 1$ \cite{das2011submodular,elenberg2016restricted,horel2016maximization}. We will generally assume throughout this work that $w_f\geq 1$ to emphasize that the objective function is typically weak submodular.

For a monotone nondecreasing function with a bounded WSC, we have the following proposition~\cite{hashemi2019submodular}:
\begin{proposition}\label{lem:curv}
Let $w_f\geq 1$ be the WSC of $f$, a normalized, monotone nondecreasing set function. Then, for two subsets $\S$ and $\T$ such that $\S\subset \T \subseteq \N$, it holds that
\begin{equation}\label{eq:prop_1}
f(\T)-f(\S)\leq w_f\sum\nolimits_{j\in \T\backslash \S}f_j(\S).
\end{equation}
\end{proposition}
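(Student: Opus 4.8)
The plan is to prove the bound by telescoping the difference $f(\T)-f(\S)$ along a chain of intermediate sets interpolating between $\S$ and $\T$, and then to control each telescoped marginal using the defining ratio bound of the WSC.

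First I would enumerate the new elements $\T\setminus\S=\{j_1,\ldots,j_k\}$ in an arbitrary but fixed order and build the increasing chain $\S_0 := \S$ and $\S_i := \S\cup\{j_1,\ldots,j_i\}$, so that $\S_k=\T$ and, for every $i$, we have $\S\subseteq \S_{i-1}\subset\N$ with $j_i\in\N\setminus\S_{i-1}$ (the latter because $j_i$ lies neither in $\S$ nor among $j_1,\ldots,j_{i-1}$). Writing the difference as a telescoping sum of consecutive marginals then gives
\begin{equation}
f(\T)-f(\S)=\sum_{i=1}^{k}\bigl(f(\S_i)-f(\S_{i-1})\bigr)=\sum_{i=1}^{k}f_{j_i}(\S_{i-1}).
\end{equation}

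The key step is to bound each term $f_{j_i}(\S_{i-1})$ by $w_f\,f_{j_i}(\S)$. Since $(\S,\S_{i-1},j_i)$ is a valid triple in $\tilde{\N}$, the definition of the WSC as the maximum of the ratio $f_j(\T)/f_j(\S)$ over $\tilde{\N}$ immediately yields $f_{j_i}(\S_{i-1})\le w_f\,f_{j_i}(\S)$. Summing this over $i$ and re-indexing the right-hand side over $j\in\T\setminus\S$ produces the claimed inequality.

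The one point requiring care is the degenerate case $f_{j_i}(\S)=0$, where the defining ratio is not literally well-posed. Here I would appeal to monotonicity together with boundedness of $w_f$: if $f_{j_i}(\S)=0$ while $f_{j_i}(\S_{i-1})>0$ for $\S\subseteq\S_{i-1}$, the corresponding ratio would be unbounded, contradicting $w_f<\infty$; hence $f_{j_i}(\S_{i-1})=0$ as well, and the per-term inequality $f_{j_i}(\S_{i-1})\le w_f\,f_{j_i}(\S)$ still holds with both sides zero. This is really the only obstacle, and it is minor; the remainder is the routine telescoping argument above.
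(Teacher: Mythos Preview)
Your proof is correct and follows the standard telescoping argument for this type of inequality. The paper itself does not prove this proposition; it simply states it and cites an external reference~\cite{hashemi2019submodular}, so there is no in-paper proof to compare against. Your handling of the degenerate case $f_{j_i}(\S)=0$ via the finiteness of $w_f$ is appropriate and matches the implicit convention the paper uses elsewhere (cf.\ the proof of Lemma~\oldref{thm:ws-truncation}).
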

%
%
%
It is worth noting that the notion of weak submodularity is not yet standardized in literature. Alternative notions of weak submodularity exist, such as those presented in \cite{das2011submodular,elenberg2016restricted,zhang2016submodular,horel2016maximization}. Such notions may simplify the derivation of the approximation bounds depending on the application at hand (see e.g., \cite{bian2017guarantees,chamon2017approximate,khanna2017scalable,hashemi2019submodular,ghasemi2019submodularity}). Using the same notion of weak submodularity as the one in this work,~\cite{das2011submodular} extends the theoretical results of \cite{nemhauser1978analysis} on \textsc{Greedy} to the case of weak submodular functions, obtaining a worst-case approximation ratio of $f(\S) \geq \left(1-e^{-1\slash w_f}\right) f(\S^\star)$.
\section{High-Probability Approximation Factors for Greedy Algorithms with Random Sampling}\label{sec:mart}
\begin{algorithm}[t]
	\caption{\textsc{Modified Randomized Greedy (MRG)}}
	\label{alg:mg}
\begin{algorithmic}[1]
		\STATE \textbf{Input:} Weak-submodular function $f$, ground set $\N$, budget bound $B$, cost function $c$, sample set sizes $r_{i}$.
		\STATE \textbf{Output:} Subset $\S_{\operatorname{mrg}}\subseteq \N$ with $c(\S_{\operatorname{mrg}})\leq B$.
		\STATE Initialize $\S^{(0)} \gets \emptyset$, $\Xs \gets \N$, $i \gets 0$
		\WHILE{$\Xs \neq \emptyset$}
		\STATE Form $\R^{(i)}$ by sampling $\min \{r_i,\lvert\Xs\rvert\}$ elements from $\Xs$ uniformly at random.
		\STATE $j_s^i \in \argmax_{j \in \R^{(i)}} \frac{f_j(\S^{(i)})}{c_j}$
		\IF{$c(\S^{(i)})+c_{j_s^i}\leq B$}
		\STATE $\S^{(i+1)} \gets \S^{(i)}  \cup \{j_s^i\}$
		\STATE $i\leftarrow i+1$
		\ENDIF
		\STATE $\Xs \leftarrow \Xs - \{j_s^i\}$
		\ENDWHILE
		\STATE $j_{\max} \in \argmax_{j \in \N} f(\{j\})$
		\STATE $\S_{\operatorname{mrg}} \gets \argmax_{\mathcal{S} \in \{ \mathcal{S}^{(i)},\{ j_{\max} \} \}} f(\S)$
        \STATE \textbf{return} $\S_{\operatorname{mrg}}$
	\end{algorithmic}
\end{algorithm}

Greedy algorithms rely on a simple philosophy and are relatively straightforward to implement. In practice, however, it is usually undesirable due to computational cost considerations to use the full \textsc{Greedy} algorithm which searches over the entire set of elements.
To overcome this computational burden, we propose two \textit{randomized} greedy algorithms that, at each iteration, consider only a randomly sampled subset of the remaining elements in the ground set.
As briefly explained in Section \ref{sec:intro}, these two algorithms will be used to tackle a budget-constrained weak-submodular sensor selection task, and its dual formulation, i.e., a performance-constrained weak-submodular sensor selection task. Furthermore, we derive performance guarantees that hold with high probability for both algorithms.
%


\subsection{Budget-Constrained Sensor Selection}
We start our high-probability analysis by studying how MRG (Algorithm~\ref{alg:mg}) performs in budget-constrained weak-submodular sensor selection problems. 
To this end, we first need to address a fundamental question: \textit{how should we choose the size of the sampling sets $\R^{(i)}$ in each iteration?} Note that we cannot directly utilize the strategies employed in the cardinality-constrained setting, i.e., setting $r_i = \frac{\lvert\N\rvert}{K}\log \frac{1}{\epsilon}$ according to \cite{mirzasoleiman2015lazier,hashemi2017randomized,hashemi2020randomized}, or adopt a progressively increasing schedule $r_i = \frac{\lvert\N\rvert}{K-i}\log \frac{1}{\epsilon}$ as outlined in \cite{hashemi2021performance,hashemi2021PSG}, where $K$ is the number of observations to select and $0<\epsilon<1$ is a parameter set by the user. To see the barrier in choosing the sampling set size $r_i$, note that due to the budget constraint, it is not known a priori how many observations MRG will end up selecting (so, $K$ is unknown).
At this point, although intuitively $r_i$ should depend on $B$ and each sensor cost $c_j$, it is not clear which strategy results in a nontrivial approximation bound. Hence, we adopt a constructive approach where we let $r_i = \frac{\lvert\N\rvert}{U}\log \frac{1}{\epsilon}$ for some $U>0$ and $0<\epsilon<1$ where $U$ is determined as part of our analysis.

We then proceed to follow and extend~\cite{hassidim2017robust,hashemi2020randomized}, and begin with a simplifying assumption that views each iteration of MRG as an approximation to \textsc{Modified Greedy} for the budget-constraint formulation \eqref{eq:fmax}, which can be obtained from Algorithm \ref{alg:mg} by setting the sample cardinality $r_i$ equal to $|\N|$. Essentially, \textsc{Greedy} is a special case of MRG in which the entire set of remaining sensors is sampled at each iteration. MRG is no longer able to consider every remaining element in the ground set at each iteration, so there is no guarantee of it successfully adding the remaining sensor whose marginal-gain-to-cost ratio is maximal as \textsc{Greedy} does. However, one can still view each iteration of MRG as adding an element satisfying
\begin{equation}\label{eq:thm-mar:mrg:mar}
\begin{aligned} 
   \frac{f_{j_{\operatorname{mrg}}^i}(\S^{(i)})}{c_{j_{\operatorname{mrg}}^i}} & \geq \eta^{(i+1)} \max_{j \in \N \backslash \S^{(i)}} \frac{f_j(\S^{(i)})}{c_j}  = \eta^{(i+1)} \frac{f_{j_{\operatorname{g}}^i}(\S^{(i)})}{c_{j_{\operatorname{g}}^i}},
\end{aligned}
\end{equation}
where the subscripts $\operatorname{mrg}$ and $\operatorname{g}$ refer to the sensor selected in the MRG and \textsc{Greedy} iterations, respectively, and $\eta^{(i)}\sim\mathcal{D}$ is a random variable drawn from some distribution $\mathcal{D}$ such that $\eta^{(i)}\in (0,1]$ for all $i \in \mathbb{N}$. 
Essentially, the key idea is that each iteration $i$ of MRG selects a sensor whose marginal-gain-to-cost ratio is at least $\eta^{(i+1)}$ times as good as the one that would be selected by \textsc{Greedy}.
In contrast to \cite{hassidim2017robust,hashemi2020randomized},  which consider i.i.d. draws of $\eta$s for the cardinality-constrained scenario, 
we exploit the fact that each iteration of MRG \textit{is} dependent on the selections made in the previous iterations. We then adopt the relaxed assumption that the stochastic process $\{\eta^{(i)}\}_{i=1}^\infty$ is a martingale~\cite{chung2006concentration}.
\begin{definition}[Martingale]\label{def:martingale}
    A stochastic process $\{Y_{i}\}_{i=1}^\infty$ is a martingale with respect to the stochastic process $\{X_{i}\}_{i=1}^\infty$ if, for all $n \in \mathbb{N}$, $\mathbb{E}[\lvert Y_{n}\rvert] < \infty$ and $\mathbb{E}[Y_{n+1}\mid X_{0},\ldots,X_{n}] = Y_{n}$.
\end{definition}
Our martingale assumption is then formalized as follows:
\begin{assumption}[Martingale assumption]\label{ass:mar}
The stochastic process $\{\eta^{(i)}\}_{i=1}^\infty$ is a martingale with respect to the stochastic process $\{\mathcal{S}^{(i)}\}_{i=0}^\infty$. That is, for all $i \in \mathbb{N},$ $\E[\eta^{(i+1)}\mid\S^{(0:i)}] = \eta^{(i)},$ where $\S^{(0:i)}\vcentcolon=\{\S^{(0)},\ldots,\S^{(i)} \}$ denotes the random collection of subsets obtained by MRG up to iteration $i$.
\end{assumption}

One may question the justifiability of adopting such an assumption. An informal yet intuitive characterization of a martingale is that it is a stochastic process that does not demonstrate a predictable drift\cite{harris2022martingale}. In numerical experiments, this is indeed observed to be the case, rendering Assumption \ref{ass:mar} reasonable.

In the following, we use the next lemma to provide a concentration bound on the stochastic process $\{\eta^{(i)}\}_{i=1}^\infty$.
%
\begin{lemma}[Azuma-Hoeffding inequality~\cite{chung2006concentration}]\label{thm:mar}
Let $\{Y_{i}\}_{i=1}^\infty$ be a martingale with respect to $\{X_{i}\}_{i=1}^\infty$ such that $Y_1 = \mu$. If there exists a sequence of real numbers $\beta_i$ such that for all $i \in \mathbb{N}, \lvert Y_i-Y_{i-1}\rvert\leq \beta_i$, then for any $\lambda \in \mathbb{R}_{\geq 0}$,
\begin{equation}
    \Pr\left(Y_{n}-\mu \leq -\lambda \right) \leq \exp\left(-\frac{\textcolor{black}{2}\lambda^2}{\sum_{i=1}^n\beta_i^2}\right).
\end{equation}
\end{lemma}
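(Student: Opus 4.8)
The plan is to prove this via the standard Chernoff (exponential Markov) method adapted to martingales. Since $Y_1 = \mu$, I would first write the deviation as a telescoping sum of martingale differences $D_i \vcentcolon= Y_i - Y_{i-1}$, so that $Y_n - \mu = \sum_{i=2}^n D_i$, where by the martingale property $\E[D_i \mid X_0,\ldots,X_{i-1}] = 0$ and, by hypothesis, $|D_i| \leq \beta_i$ almost surely. For any $s > 0$, applying Markov's inequality to the nonnegative random variable $e^{-s(Y_n - \mu)}$ gives
\begin{equation}
\Pr\!\left(Y_n - \mu \leq -\lambda\right) = \Pr\!\left(e^{-s(Y_n-\mu)} \geq e^{s\lambda}\right) \leq e^{-s\lambda}\,\E\!\left[e^{-s(Y_n-\mu)}\right].
\end{equation}
The task then reduces to bounding the moment generating function $\E[e^{-s\sum_{i=2}^n D_i}]$.

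Next I would peel off the differences one at a time using the tower property of conditional expectation. Conditioning on $X_0,\ldots,X_{n-1}$, the partial sum $\sum_{i=2}^{n-1} D_i$ becomes measurable and factors out, leaving
\begin{equation}
\E\!\left[e^{-s\sum_{i=2}^n D_i}\right] = \E\!\left[e^{-s\sum_{i=2}^{n-1} D_i}\,\E\!\left[e^{-sD_n}\mid X_0,\ldots,X_{n-1}\right]\right].
\end{equation}
The inner factor is controlled by Hoeffding's lemma: since $\E[D_n \mid X_0,\ldots,X_{n-1}] = 0$ and $D_n$ lies in a bounded interval determined by $\beta_n$, the conditional MGF satisfies $\E[e^{-sD_n}\mid X_0,\ldots,X_{n-1}] \leq e^{s^2\beta_n^2/2}$, the exact constant in the exponent being fixed by the length of the interval in which $D_n$ is confined. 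Iterating this peeling step down to $i=2$ yields the product bound $\E[e^{-s(Y_n-\mu)}] \leq \exp\!\big(\tfrac{s^2}{2}\sum_{i=2}^n \beta_i^2\big) \leq \exp\!\big(\tfrac{s^2}{2}\sum_{i=1}^n \beta_i^2\big)$.

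Finally I would combine the two bounds and optimize over the free parameter $s$. Substituting back gives $\Pr(Y_n - \mu \leq -\lambda) \leq \exp\!\big(-s\lambda + \tfrac{s^2}{2}\sum_{i=1}^n\beta_i^2\big)$, and minimizing the exponent over $s > 0$ (taking $s^\star = \lambda/\sum_{i=1}^n\beta_i^2$) produces the Gaussian-type tail of the claimed form. I expect the main obstacle to be the conditional step: one must verify carefully that the martingale property yields $\E[D_i\mid X_0,\ldots,X_{i-1}]=0$ and that the boundedness $|Y_i - Y_{i-1}|\leq\beta_i$ holds almost surely so that Hoeffding's lemma may be applied \emph{inside} the conditional expectation, and then track the precise numerical constant—which is governed entirely by the width of the interval in which each difference is confined—through to the final optimization.
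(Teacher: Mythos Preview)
The paper does not prove this lemma at all; it is quoted from \cite{chung2006concentration} and used as a black box. Your Chernoff--Hoeffding argument (exponential Markov, tower-property peeling, Hoeffding's lemma on each bounded martingale difference, then optimization in $s$) is exactly the standard proof and is correct in outline.

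One concrete point to flag: carry your own constants through to the end rather than deferring to ``the claimed form.'' With $|D_i|\le\beta_i$ the difference lies in an interval of width $2\beta_i$, so Hoeffding's lemma gives $\E[e^{-sD_i}\mid\cdot]\le e^{s^2(2\beta_i)^2/8}=e^{s^2\beta_i^2/2}$, precisely as you wrote. Optimizing $-s\lambda+\tfrac{s^2}{2}\sum_i\beta_i^2$ at $s^\star=\lambda/\sum_i\beta_i^2$ then yields
\[
\Pr(Y_n-\mu\le-\lambda)\le\exp\!\Big(-\frac{\lambda^2}{2\sum_{i=1}^n\beta_i^2}\Big),
\]
which is the standard Azuma--Hoeffding constant under the hypothesis $|Y_i-Y_{i-1}|\le\beta_i$. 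This differs by a factor of $4$ in the exponent from the $\exp(-2\lambda^2/\sum_i\beta_i^2)$ written in the lemma; the latter is the form one gets when $\beta_i$ denotes the \emph{width} $b_i-a_i$ of the increment interval rather than a symmetric bound on $|D_i|$. Your derivation is internally consistent; just be aware that it does not reproduce the constant as stated, and that this is a discrepancy in the lemma's phrasing rather than in your argument.
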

Using Lemma \ref{thm:mar} and Assumption \ref{ass:mar}, we now study the performance of MRG for weak-submodular maximization subject to a budget constraint. We obtain the following theorem:
%
\begin{theorem}[Performance of MRG]\label{thm:mrg-mar}
Let $\{\eta^{(i)}\}_{i=1}^\infty$ satisfy Assumption \ref{ass:mar} and the conditions of Lemma \ref{thm:mar}, where, for all $i \in \mathbb{N},$ $\E[\eta^{(i)}]\geq \mu$, for some $\mu \in \mathbb{R}_{\geq 0}$. Then, for any confidence parameter $0<\delta <1$, MRG constructs a solution $\S_{\operatorname{mrg}}$ to \eqref{eq:fmax} such that
\begin{equation}\label{eq:thm:app-factor-mar}
    \frac{f(\S_{\operatorname{mrg}})}{f(\S^\ast)} \geq \frac{1- \exp\left[-\frac{1}{w_f}\left(\mu - \frac{c_{\max}}{B}\sqrt{\frac{U}{2}\log \frac{1}{\delta}}\right)\right]}{2w_{f}^{2}},
\end{equation}
with probability at least $1-\delta$, where $\S^\ast$ is an optimal solution, $U$ is the smallest integer such that $\sum_{j=1}^U \bar{c}_j \geq B$, and $c_{max} \vcentcolon= \Bar{c}_{|\Xs|}$.
\end{theorem}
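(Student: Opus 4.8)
The plan is to combine a deterministic per-iteration recursion, driven by weak submodularity and the greedy-maximality relation \eqref{eq:thm-mar:mrg:mar}, with an Azuma--Hoeffding bound on the random factors $\eta^{(i)}$. First I would establish the recursion. Fix iteration $i$ and let $j_{\operatorname{g}}^i$ be the \textsc{Greedy} candidate. By greedy maximality every $j\in\N\setminus\S^{(i)}$ satisfies $f_j(\S^{(i)})\le c_j\, f_{j_{\operatorname{g}}^i}(\S^{(i)})/c_{j_{\operatorname{g}}^i}$. Summing over $j\in\S^\ast\setminus\S^{(i)}$, using $\sum_{j\in\S^\ast\setminus\S^{(i)}}c_j\le c(\S^\ast)\le B$, and invoking Proposition \ref{lem:curv} gives $f(\S^\ast)-f(\S^{(i)})\le w_f B\, f_{j_{\operatorname{g}}^i}(\S^{(i)})/c_{j_{\operatorname{g}}^i}$. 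Combining this with \eqref{eq:thm-mar:mrg:mar} and the identity $f(\S^{(i+1)})-f(\S^{(i)})=c_{j_{\operatorname{mrg}}^i}\, f_{j_{\operatorname{mrg}}^i}(\S^{(i)})/c_{j_{\operatorname{mrg}}^i}$ yields
\[
 f(\S^\ast)-f(\S^{(i+1)})\le\left(1-\frac{c_{j_{\operatorname{mrg}}^i}\,\eta^{(i+1)}}{w_f B}\right)\left(f(\S^\ast)-f(\S^{(i)})\right).
\]

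Next I would unroll this recursion up to and including the first element whose addition would violate the budget, say step $L$ with $\S^{(L+1)}=\S^{(L)}\cup\{j^L\}$ and $c(\S^{(L+1)})>B$, applying \eqref{eq:thm-mar:mrg:mar} to this last (rejected) candidate as well. Using $1-x\le e^{-x}$ together with $\sum_{i=0}^{L}c_{j^i}>B$ gives $f(\S^{(L+1)})\ge f(\S^\ast)\left(1-\exp\left[-\tfrac{1}{w_f B}\sum_{i=0}^{L}c_{j^i}\eta^{(i+1)}\right]\right)$. To pass from the infeasible $\S^{(L+1)}$ to the feasible output, I would split off the overflow element and bound its marginal through the WSC \eqref{eq:weak-sub}, namely $f_{j^L}(\S^{(L)})\le w_f f(\{j^L\})\le w_f f(\{j_{\max}\})$. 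Since $\S_{\operatorname{mrg}}$ dominates both $\S^{(L)}$ and $\{j_{\max}\}$, this produces the leading constant in \eqref{eq:thm:app-factor-mar}: the factor $2$ comes from the $\{j_{\max}\}$ comparison, while the powers of $w_f$ arise from the single use of Proposition \ref{lem:curv} in the recursion (the source of the $1/w_f$ inside the exponent) and the WSC bound applied to the overflow marginal.

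The crux is the probabilistic control of the random exponent $\tfrac1B\sum_{i=0}^{L}c_{j^i}\eta^{(i+1)}$. Here I would exploit Assumption \ref{ass:mar}: the cost-weighted deviations of the process $\{\eta^{(i)}\}$ form a martingale whose increments are bounded in magnitude by $c_{\max}$, and the number of terms is at most $U$, since even the cheapest admissible costs saturate the budget after $U$ selections, so $\sum_i\beta_i^2\le U c_{\max}^2$. Applying Lemma \ref{thm:mar} with $\lambda$ chosen so that the right-hand side equals $\delta$, and using $\E[\eta^{(i)}]\ge\mu$, yields $\tfrac1B\sum_{i=0}^{L}c_{j^i}\eta^{(i+1)}\ge\mu-\tfrac{c_{\max}}{B}\sqrt{\tfrac{U}{2}\log\tfrac1\delta}$ with probability at least $1-\delta$. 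Substituting this lower bound into the exponent and using monotonicity of $x\mapsto 1-e^{-x}$ gives \eqref{eq:thm:app-factor-mar}.

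I expect the main obstacle to be precisely this concentration step. Unlike the i.i.d.\ analyses of \cite{hassidim2017robust,hashemi2020randomized}, the factors $\eta^{(i)}$ are dependent, so one must identify the correct martingale, verify the $c_{\max}$ bound on its increments, and reconcile the random, budget-induced stopping time with the deterministic step count $U$ so that Lemma \ref{thm:mar} applies with the stated variance proxy $\sum_i\beta_i^2\le U c_{\max}^2$. Handling the budget-overflow element consistently, both when it is included in the unrolled recursion and when it is discarded in favor of the feasible candidates $\S^{(L)}$ and $\{j_{\max}\}$, is the other delicate point on which the final constant depends.
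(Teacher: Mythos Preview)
Your approach is essentially the same as the paper's: establish the per-iteration recursion from Proposition~\ref{lem:curv} and \eqref{eq:thm-mar:mrg:mar}, unroll it up to the first time the budget check in line~7 fails, bound the overflow marginal via the WSC and the comparison with $\{j_{\max}\}$, and then invoke Lemma~\ref{thm:mar} under Assumption~\ref{ass:mar} to control the random exponent. You have also correctly isolated the two genuinely delicate points---identifying the right cost-weighted martingale so that Azuma--Hoeffding applies with increment bound $c_{\max}$ and variance proxy $Uc_{\max}^2$, and reconciling the random stopping index with the deterministic count $U$---which is exactly where the work lies in the full proof.
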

\begin{proof}[Sketch of proof]
    The complete proof is stated in \cite{hibbard_hashemi_tanaka_topcu_2023}. The aim is to prove the result assuming that the selection returned by the inner loop of MRG in Algorithm \ref{alg:mg}  is limited to the subset $\S^{(i_v)}$ constructed up the to point before the condition in line 7 of the algorithm evaluates to false for the first time, since this will ensure that it will hold in the general case because of the monotone nondecreasing nature of $f$. 

    The subsequent applications of \eqref{eq:weak-sub}, \eqref{eq:prop_1}, and \eqref{eq:thm-mar:mrg:mar}, the use of induction over the iterations of the algorithm, and the invocation of Lemma \ref{thm:mar} allow us to place an upper bound on the ratio ${f(\S^{(i_v)})}/{f(\S^\ast)}$, completing the proof.
\end{proof}
One may wonder why lines 13 and 14 are present in the algorithm. The practical and intuitive reason is that these lines prevent certain adversarial scenarios where a single-element solution outperforms the greedily-constructed solution. More pragmatically, the full proof of Theorem \ref{thm:mrg-mar} uses these lines in the establishment of the performance lower-bound.
Theorem \ref{thm:mrg-mar} establishes a high-probability bound on the worst-case performance of MRG. Through observation of the right-hand side of \eqref{eq:thm:app-factor-mar}, one can see that the performance lower bound depends on several factors. Of these, $U$, $w_f$, $c_{\max}$ and $B$ are dictated by the problem itself and are not affected by the stochasticity of the algorithm. $\delta$ is the confidence parameter, which one sets \textit{a posteriori}, to see the performance lower bound guaranteed at a certain probability. 
Meaning, $\delta$ is also independent of the stochasticity of the algorithm. The only parameter which depends on the actual process of the algorithm is $\mu$, which one may think of as a lower bound on the expected values of all $\E[\eta^{(i)}]$ through the course of the algorithm. Even more intuitively, one can think of this value as a lower bound on the expected discrepancy between the performance of the standard \textsc{Greedy} procedure and MRG. When $c_{\max}\sqrt{U} \ll B$ 
(e.g., in the cardinality-constrained setting, where the cardinality bound $K$ satisfies $\sqrt{K} \ll K)$,
the right-hand side of \eqref{eq:thm:app-factor-mar} can be closely approximated by $(1-e^{-\mu/w_f})/{2w_f^{2}}$. 
Additionally, if $f$ is submodular (for which $w_f = 1$) and $\mu=1$, i.e., when $r_i = |\N|$ for all $i$, this last result reduces to $\frac{1}{2}(1-1/e)$, the approximation factor of \textsc{Greedy}~\cite{khuller1999budgeted}.
It is worth noting that the values of $c_{\max}$, $U$, and $B$ in equation \eqref{eq:thm:app-factor-mar} are either known beforehand or can be efficiently computed before running MRG. In contrast, $\omega_{f}$ and $\mu$ can be estimated through simulation. In certain specialized scenarios, such as minimizing the MSE in specific linear or quadratic observation models, upper bounds for $\omega_{f}$ can be found~\cite{hashemi2020randomized,hashemi2019submodular}.

The proof of Theorem \ref{thm:mrg-mar} also provides an explanation as to the previous determination of the size of the sets $\R^{(i)}$ at each step of MRG.
Considering that in the scenario with a cardinality constraint, the value of the cardinality bound $K$ matches the size of $\S^\ast$, and given that $ \lvert\S^\ast\rvert\leq U$, we suggest selecting $r_i = \frac{\lvert\N\rvert}{U}\log \frac{1}{\epsilon}$ sensors during each iteration of MRG. The results of $r_i$ on the runtime and the performance of the algorithm in a practical setting are demonstrated in Section \ref{sec:examples}, in Table \ref{tab:average_sol_time_mrg_kf} and Figure \ref{fig:various_MSE}.

It might be counterintuitive to the reader that the stochastic process $\eta^{(i)}$ appears to be unused in the algorithm. Put simply, $\eta^{(i)}$ describes a sequence of approximation ratios of the marginal-utility-per-cost performance of the stochastic greedy selection process to the marginal-utility-per-cost performance of the standard greedy selection process. Clearly, this is not a quantity that is meant to be (or \textit{can} be) used in the design of the algorithm, since one would have no access to realizations of this sequence unless one ran a copy of the standard greedy selection process at each iteration, completely annulling the computational efficiency of MRG and essentially ending up twice as costly. Rather, $\eta^{(i)}$, (along with $\mu$) is a natural byproduct resulting from the course of the algorithm, whose values are of no interest to us. The only quality we would like $\eta^{(i)}$ to possess is that it be a martingale, a reasonable assumption that appears to hold in relevant practical settings. 

Note that for $\delta = \exp{(-2/U(\mu B / c_{\max})^2)}$, the right-hand side is exactly equal to $0$, and for any value of $\delta$ less than or equal to this quantity, no guarantee is provided. 
In fact, for $\delta = \exp{(-2/U(\mu B / c_{\max})^2)}$, the right-hand side is exactly equal to $0$, and for any value of $\delta$ less than or equal to this quantity, no guarantee is provided. 


%

\begin{algorithm}[t]
	\caption{\textsc{Dual Randomized Greedy (DRG)}}
	\label{alg:dg}
\begin{algorithmic}[1]
		\STATE \textbf{Input:} Weak-submodular function $f$, ground set $\N$, performance threshold $A$, cost function $c$, sample set sizes $r_{i}$.
		\STATE \textbf{Output:} Subset $\S_{\operatorname{drg}}\subseteq \N$ with $f(\S_{\operatorname{drg}})\geq A$.
		\STATE Initialize $\S^{(0)} \gets \emptyset$, $\Xs \gets \N$, $i \gets 0$
		\WHILE{$f(\S^{(i)})< A$}
		\STATE Form $\R^{(i)}$ by sampling $\min \{r_i,\lvert\Xs\rvert\}$ elements from $\Xs$ uniformly at random.
		\STATE $j_s^{i} \in \argmax_{j \in \R^{(i)}} \frac{f_j(\S^{(i)})}{c_j}$
		\STATE $\S^{(i+1)} \gets \S^{(i)}  \cup \{j_s^{i} \}$
		\STATE $i\leftarrow i+1$
		\STATE $\Xs \leftarrow \Xs - \{j_s^{i} \}$
		\ENDWHILE
		\STATE $\S_{\operatorname{drg}} \gets \S^{(i)}$
        \STATE \textbf{return} $\S_{\operatorname{drg}}$
	\end{algorithmic}
\end{algorithm}
\subsection{Performance-Constrained Sensor Selection}
We now establish a high-probability bound for the performance of \textsc{Dual Randomized Greedy} (DRG) (Algorithm~\ref{alg:dg}) for the dual problem of \eqref{eq:fmax}, where we aim to satisfy a performance threshold while minimizing the total cost of the selected sensors. This problem is formally expressed as
\begin{equation}\label{eq:fmax:dual}
\begin{gathered}
\min_{\S \subseteq \N}c(\S)\\
\text{s.t.}\;  f(\S)\geq A,
\end{gathered}
\end{equation}
for some performance threshold $A \in \mathbb{R}_{\geq 0}$. 
Note that when $A = f(\N)$ and $f$ is submodular, \eqref{eq:fmax:dual} corresponds exactly to the well-known submodular set covering problem \cite{wolsey1982analysis}.
In that case, the standard \textsc{Greedy} algorithm, which again represents a special case of Algorithm \ref{alg:dg} in which $r_i = \lvert\N\rvert$, obtains the optimal approximation factor \cite{wolsey1982analysis}, given by
\begin{equation}\label{eq:dg-bound}
    \frac{c(\S_{\operatorname{g}})}{c(\S^\ast)} \leq 1+\log\left(\frac{f_j(\S^{(0)})}{f_j(\S^{(L-1)})}\right)\leq 1+\log\frac{ M}{m},
\end{equation}
where $L$ denotes the total number of iterations taken by \textsc{Greedy} until a solution is returned, $M \vcentcolon= \max_{j\in \N}f(\{j\})$, and $\quad m \vcentcolon= \min_{j\in \N \setminus \S^{(L-1)}}f_j(\S^{(L-1)}).$
Under the same observation made in~\eqref{eq:thm-mar:mrg:mar}, we establish that DRG
achieves a similar approximation factor with high probability.
%
\begin{theorem}[Performance of DRG]\label{thm:dual}
Let $\{\eta^{(i)}\}_{i=1}^\infty$ satisfy Assumption \ref{ass:mar} and the conditions of Lemma \ref{thm:mar}, where, for all $i \in \mathbb{N}, \E[\eta^{(i)}]\geq \mu$, for some $\mu \in \mathbb{R}_{\geq 0}$. Then, for any confidence parameter $0<\delta <1$, DRG finds a solution $\S_{\operatorname{drg}}$ to problem \eqref{eq:fmax:dual} such that, with probability at least $1-\delta$,
\begin{equation}\label{eq:thm:app-factor-dual}
\begin{aligned}
    \frac{c(\S_{\operatorname{drg}})}{c(\S^\ast)} &\leq \frac{{w_f}}{\mu} \left[1+(L-1)\log w_f+\log\frac{ M}{m}\right]+ \frac{1}{\mu c(\S^\ast)} \sqrt{\frac{1}{2}\log \frac{1}{\delta}c^2(\S_{\operatorname{drg}})},
    \end{aligned}
\end{equation}
where $\S^\ast$ is an optimal solution, $L \leq \lvert\N\rvert$ is the number of iterations required by DRG until termination, and $ c^2(\S_{\operatorname{drg}}) \vcentcolon= \sum_{j\in \S_{\operatorname{drg}}} c^2_j.$
\end{theorem}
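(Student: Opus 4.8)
The plan is to control the cost-weighted quantity $\Phi \vcentcolon= \sum_{i=0}^{L-1}\eta^{(i+1)} c_{j_s^i}$ from two sides: a deterministic upper bound that generalizes the classical set-cover guarantee \eqref{eq:dg-bound} to the weak-submodular regime, and a high-probability lower bound furnished by the martingale concentration in Lemma \ref{thm:mar}. Dividing the resulting chain of inequalities by $\mu\, c(\S^\ast)$ reproduces \eqref{eq:thm:app-factor-dual}. I write $\Delta_i \vcentcolon= f_{j_s^i}(\S^{(i)})$ for the marginal gain realized at iteration $i$ and note that, since DRG halts the first time $f(\S^{(i)}) \geq A$, every $i \leq L-1$ obeys $f(\S^{(i)}) < A \leq f(\S^\ast)$, so all remaining gaps are strictly positive.

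For the upper bound, I would start from \eqref{eq:thm-mar:mrg:mar}, whose DRG analogue states that $\eta^{(i+1)} c_{j_s^i}$ is at most the realized gain $\Delta_i$ divided by the full-search (\textsc{Greedy}) marginal-gain-to-cost ratio $\max_{j\in\N\setminus\S^{(i)}} f_j(\S^{(i)})/c_j$ at that step. Proposition \ref{lem:curv}, applied to $\S^{(i)} \subset \S^{(i)} \cup \S^\ast$ and combined with the elementwise bound $f_j(\S^{(i)}) \leq c_j \max_k f_k(\S^{(i)})/c_k$, controls this ratio in terms of the optimal cost, introducing the factor $w_f$. Summing over the iterations and telescoping exactly as in the derivation of \eqref{eq:dg-bound}---now with one extra factor of $w_f$ contributed by Proposition \ref{lem:curv} at each interior step and the terminal iteration supplying the additive constant $1$---yields $\Phi \leq w_f\, c(\S^\ast)\big[1 + (L-1)\log w_f + \log\tfrac{M}{m}\big]$. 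This is precisely the first term of \eqref{eq:thm:app-factor-dual} after normalization, and it collapses to \eqref{eq:dg-bound} when $w_f = 1$ and $\eta^{(i)} \equiv 1$.

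For the lower bound, I would invoke Assumption \ref{ass:mar}: since $\{\eta^{(i)}\}$ is a martingale with respect to $\{\S^{(i)}\}$, the increments $\eta^{(i+1)} - \eta^{(i)}$ are martingale differences, which suggests weighting them by the realized selection costs to form a martingale whose $i$-th bounded-difference constant is $\beta_i = c_{j_s^i}$ (its rigor is the crux addressed below), since $\eta^{(i)} \in (0,1]$ forces $|\eta^{(i+1)} - \eta^{(i)}| \leq 1$. Applying Lemma \ref{thm:mar} with $\lambda = \sqrt{\tfrac12 \log\tfrac1\delta\, c^2(\S_{\operatorname{drg}})}$, where $\sum_i \beta_i^2 = \sum_{j\in\S_{\operatorname{drg}}} c_j^2 = c^2(\S_{\operatorname{drg}})$, and using $\E[\eta^{(i)}] \geq \mu$, gives with probability at least $1-\delta$ the bound $\Phi \geq \mu\, c(\S_{\operatorname{drg}}) - \sqrt{\tfrac12 \log\tfrac1\delta\, c^2(\S_{\operatorname{drg}})}$. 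Chaining the two bounds and solving for $c(\S_{\operatorname{drg}})/c(\S^\ast)$ then delivers \eqref{eq:thm:app-factor-dual}.

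I expect the concentration step, rather than the telescoping, to be the main obstacle. The subtlety is that the weight $c_{j_s^i}$ and the ratio $\eta^{(i+1)}$ are produced by the \emph{same} random sample $\R^{(i)}$, so $c_{j_s^i}$ is not measurable with respect to $\sigma(\S^{(0:i)})$, the $\sigma$-algebra in whose conditional expectation Assumption \ref{ass:mar} is phrased; the cost-weighted martingale is therefore not a naive martingale transform and must be justified carefully. The natural route is a Doob martingale for $\Phi$ together with a bounded-differences argument showing that altering the outcome of a single iteration $i$ perturbs $\Phi$ by at most $c_{j_s^i}$, which recovers $\beta_i = c_{j_s^i}$; one must also accommodate the fact that the number of iterations $L$ is itself a stopping time. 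A lesser technical point is the separate treatment of the terminal iteration in the telescoping, where the marginal gain may overshoot the threshold $A$ and so is absorbed into the additive constant rather than the logarithmic term.
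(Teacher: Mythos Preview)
Your two-sided control of $\Phi$ and the invocation of Lemma~\ref{thm:mar} for the concentration step match the paper's architecture, and you correctly flag the measurability of the cost weights $c_{j_s^i}$ as the delicate point in applying Azuma--Hoeffding. Where you and the paper diverge is in the deterministic upper bound. The paper's sketch does not telescope over the optimality gaps; it passes to a \emph{linear-programming relaxation} of \eqref{eq:fmax:dual} and appeals to weak duality, in the style of Wolsey's original submodular-cover argument. In that route the dual variables are built from the greedy marginal gains, which is why the bound \eqref{eq:thm:app-factor-dual} is phrased in terms of $M=\max_j f(\{j\})$ and $m=\min_j f_j(\S^{(L-1)})$ rather than the gaps $f(\S^\ast)-f(\S^{(i)})$, and the $(L-1)\log w_f$ term arises because weak submodularity allows the marginal gains entering the dual-feasibility check to differ by a factor of up to $w_f$ at each of the $L-1$ transitions between consecutive selections.

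The gap-based telescoping you describe delivers $\Phi \leq w_f\,c(\S^\ast)\sum_i \Delta_i/g_i$ with $g_i = f(\S^\ast)-f(\S^{(i)})$, whose natural telescope is $\log(g_0/g_{L-1})$ plus a terminal contribution---a legitimate bound, but not the one in \eqref{eq:thm:app-factor-dual}. In particular, your claim that ``one extra factor of $w_f$ contributed by Proposition~\ref{lem:curv} at each interior step'' generates $(L-1)\log w_f$ is where the argument slips: Proposition~\ref{lem:curv} is applied once per iteration to pass from the full-search ratio to the gap, and that single application supplies the \emph{outer} multiplier $w_f$ in front of the bracket, not a compounding factor inside the logarithm. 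To reproduce the exact form of \eqref{eq:thm:app-factor-dual} you would need either to carry out the LP-duality construction explicitly, or to show how the marginal-gain quantities $M$, $m$ and the $(L-1)\log w_f$ term can be recovered from your gap telescope; as written, the deterministic half of your plan does not yet reach the stated bound.
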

\begin{proof}[Sketch of proof]
     The complete proof is stated in \cite{hibbard_hashemi_tanaka_topcu_2023}. Here, the fundamental idea is to consider a relaxation of \eqref{eq:fmax:dual}, which brings it into the domain of linear programming. The solutions to this relaxation can then be shown, by application of the weak-duality theorem, to be solutions to the original problem. 
     
     Lastly, Lemma \ref{thm:mar} is invoked to obtain the high-probability guarantee, completing the proof.
\end{proof}
Recall that the standard \textsc{Greedy} algorithm is a special case of DRG in which $r_i = \lvert\N\rvert$ for all $i$. In this case, we can set $\mu = \delta = 1$. Using the result of Theorem~\ref{thm:dual}, we directly obtain that
\begin{equation}
    \frac{c(\S_{\operatorname{drg}})}{c(\S^\ast)} \leq w_f \left[1+(L-1)\log w_f+\log\frac{ M}{m}\right],
\end{equation}
which extends the result of \cite{wolsey1982analysis} to the general setting of performance-constrained weak-submodular optimization that we have formulated in \eqref{eq:fmax:dual}, and generalizes the result established in \cite{JMLR:v19:16-534} for the uniform unit-cost case. This last result naturally reduces to \eqref{eq:dg-bound} for the case that $f$ is a submodular function, for which $w_{f} = 1$.
Much like the performance bound of MRG, the upper bound on the approximation ratio of DRG is dependent on terms that arise from the problem setting as well as those that are due to the stochasticity of the algorithm. In particular, $w_f$, $c(S^\ast)$ and $m$ are parameters that depend on the nature of the problem at hand and are independent of the algorithm. $\mu$ and $\delta$ serve very similar purposes to those in the case of MRG, and are already discussed thoroughly in that section. $M$ is interesting in that it depends both on the nature of the problem and the objective functions, and also the selections made by the algorithm up to the $(L-1)$th step. Finally, $L$ is a parameter that essentially indicates how long the algorithm runs until a solution is found, and depends entirely on the course of the algorithm.
Lastly, it's worth noting that the parameter $M$ can also be calculated efficiently before starting the DRG process, whereas the values of $L$ and $m$ are determined during the execution of the algorithm.
\section{A Randomized Method for Worst-Case Robustness}\label{sec:robust}
In this section, we study the application of the DRG algorithm to the solution of a standard formulation of the robust (weak) submodular maximization problem under the budget constraint proposed in \cite{RSOS}.
We assume that we are given a finite family of weak-submodular, monotone nondecreasing, and normalized functions $f^i, f^2, \ldots, f^n$.\footnote{The superscript notation is used to index the family of functions to avoid confusion with the marginal return notation.} We formulate the robust problem as follows:
\begin{equation}
\begin{gathered}
\max_{\S\subseteq\N}\min_{i\in[n]}f^i(\S) \label{robust}\\
\text{s.t.} \; c(\S) \leq B.
\end{gathered}
\end{equation}
We can visualize this problem as such: We have $n$ distinct performance objectives that we want to maximize by selecting the optimal subset $\S \subseteq \N$. Since we expect a satisfactory performance with respect to each of these objectives, we want to maximize the worst-performing one, all the while staying under a given budget constraint $B \in \mathbb{R}_{\ge 0}$.
In \cite{RSOS}, the \textsc{Submodular Saturation Algorithm} (SSA) is proposed, which solves a relaxation of \eqref{robust} in the setting where all of the objectives are submodular and a cardinality constraint is posed. SSA is presented in Algorithm \ref{alg:ssa}.
\begin{algorithm}[t]
\caption{\textsc{Submodular Saturation Algorithm} (SSA)\cite{RSOS}}
\label{alg:ssa}
\hspace*{\algorithmicindent}\textbf{Input:} Finite family of monotone nondecreasing submodular functions $f^1, \ldots f^n$, ground set $\N$, cardinality bound $K\le \lvert\N\rvert$, relaxation parameter $\alpha \geq 1$ \\
\hspace*{\algorithmicindent}\textbf{Output:} Solution set $\S$
\begin{algorithmic}[1]
\STATE $k_m \leftarrow 0$
\STATE $k_M \leftarrow \min_{i\in[n]} f^i(\N)$
\STATE $\S \leftarrow \emptyset$
\WHILE{$k_M - k_m \geq 1/n$}
\STATE $k \leftarrow (k_M - k_m)/2$
\STATE Define $\Bar{f}_{(k)}(\Xs) := \frac{1}{n}\sum_{i=1}^n \min\{f^i(\Xs), k\}$
\STATE $\hat{\S} \leftarrow \textsc{Greedy}(\Bar{f}_{(k)}, k)$
\IF{$\lvert\hat{\S}\rvert > \alpha K$}
\STATE $k_M \leftarrow k$
\ELSE
\STATE $k_m \leftarrow k$
\STATE $\S \leftarrow \hat{\S}$
\ENDIF
\ENDWHILE
\STATE \textbf{return} $\S$
\end{algorithmic}
\end{algorithm}

To facilitate the following discussion, we now provide a brief description of SSA. Given a finite family of normalized, monotone nondecreasing submodular functions $f^1, \ldots f^n$, a cardinality bound $K\in\mathbb{N}$, and a relaxation parameter $\alpha \geq 1$, SSA constructs a solution $\S \subseteq \N$ such that $f^i(\S) \geq k$ for a certain $k > 0$ and $\lvert\S\rvert \leq \alpha K$. Hence, it solves a relaxation of \eqref{robust}, where the budget constraint has been reduced to a cardinality constraint and relaxed by a factor of $\alpha$. 

SSA leverages two useful facts. The first is that when $f$ is a submodular function, so is the truncated function given by $f_{(k)}(\Xs) \vcentcolon= \min\{f^i(\Xs), k\}$ \cite{RSOS}, which we call $f$ truncated at $k$. The second is that the nonnegative weighted sum (and hence the mean) of submodular functions is again submodular \cite{RSOS}. The algorithm then truncates the input functions $f^1, \ldots, f^n$ and averages them, producing the submodular function $\Bar{f}_{(k)}$. It adaptively decides on the truncation value $k$ by doing a line search over the feasible values of $k$, and outputs a solution $\S \subseteq \N$ with $f^i(\S) \geq k$ for all $i \in [n]$, where $k$ is the highest value found by the line search procedure. As can be seen in line 7, SSA makes use of the \textsc{Greedy} as a subroutine. The aim of this section is twofold: to extend the result given in \cite{RSOS} using \textsc{Greedy} to one using the computationally cheaper, randomized version of DRG, and to further generalize the result to the case involving weak submodular functions and a more complex budget constraint with nonuniform costs instead of the simpler cardinality-constrained setting.

Now, with regard to SSA, we have the following theoretical guarantee:
\begin{theorem}[Performance of SSA\cite{RSOS}]\label{thm:ssa}
For any cardinality bound $K \in \mathbb{N}$, SSA finds a solution $\hat{\S}$ to a relaxation of \eqref{robust}, where $f^i$ are submodular for all $i \in [n]$, such that
\begin{equation}
\min_{i\in[n]} f^i(\hat{\S}) > \max_{\S\subseteq \N, c(\S)\leq B}\min_{i\in[n]} f^i(\S)
\end{equation}
and $c(\hat{\S}) \leq \alpha B,$ for $\alpha = 1 + \log(\max_{j\in \N} \sum_{i=1}^n f^i(\{j\})$.
\end{theorem}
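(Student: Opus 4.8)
The plan is to reduce the robust max--min problem \eqref{robust} to a family of submodular set-cover problems indexed by a candidate saturation level $k$, and then to certify the bisection that SSA runs over $k$. The two structural facts quoted in the surrounding text---that the truncation $\min\{f^i(\cdot),k\}$ preserves monotonicity and submodularity, and that a nonnegative average of submodular functions is again submodular---immediately imply that each $\bar f_{(k)}$ is normalized, monotone nondecreasing, and submodular, so the usual \textsc{Greedy} cover guarantees apply to it. First I would record the \emph{saturation characterization}: since each summand $\min\{f^i(\S),k\}$ is capped at $k$, we have $\bar f_{(k)}(\S)\le k$ with equality if and only if $f^i(\S)\ge k$ for every $i\in[n]$. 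Thus driving $\bar f_{(k)}$ up to its maximal value $k$ is \emph{exactly} equivalent to simultaneously saturating all $n$ objectives at level $k$.

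Next I would set up the feasibility comparison. Let $k^\star := \max_{\S\subseteq\N,\,c(\S)\le B}\min_{i\in[n]}f^i(\S)$ be the optimal robust value and let $\S^\star$ attain it, so $c(\S^\star)\le B$. For every $k\le k^\star$ the set $\S^\star$ satisfies $\bar f_{(k)}(\S^\star)=k$ by the characterization above, so the minimum-cost set $\S^\star_k$ that covers $\bar f_{(k)}$ obeys $c(\S^\star_k)\le c(\S^\star)\le B$. This is the lever that ties the internal cover problems back to the original budget $B$.

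Then I would control the subroutine $\textsc{Greedy}(\bar f_{(k)},k)$ via the greedy submodular set-cover guarantee already recorded in \eqref{eq:dg-bound}, specialized to the genuinely submodular $\bar f_{(k)}$ (so $w_f=1$): its output satisfies $c(\hat\S)/c(\S^\star_k)\le 1+\log(M/m)$, where $M=\max_{j}\bar f_{(k)}(\{j\})$ and $m$ is the smallest positive marginal encountered. Bounding $M\le \frac{1}{n}\max_{j}\sum_i f^i(\{j\})$, and using integrality of the $f^i$ (which forces every positive marginal of $\bar f_{(k)}$ to be a multiple of $1/n$, hence $m\ge 1/n$), collapses the ratio to $M/m\le \max_{j}\sum_i f^i(\{j\})$ and yields the stated $\alpha=1+\log\big(\max_{j}\sum_i f^i(\{j\})\big)$. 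Combining this with the previous paragraph, for every $k\le k^\star$ the greedy cover obeys $c(\hat\S)\le\alpha\, c(\S^\star_k)\le\alpha B$, so the rejection criterion (in the unit-cost form $\lvert\hat\S\rvert>\alpha K$, and in general $c(\hat\S)>\alpha B$) can fire only when $k>k^\star$. Hence every accepted threshold $k_m$ tracks $k^\star$ from below while every rejected $k_M$ lies above it; the returned $\S=\hat\S$ satisfies $f^i(\S)\ge k_m$ for all $i$ with $c(\S)\le\alpha B$, and the $1/n$ bisection tolerance together with the integrality of the attainable values of $\min_i f^i$ forces $k_m>k^\star$, which is exactly the claimed strict inequality.

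The step I expect to be the main obstacle is the cover-accounting that produces the precise constant $\alpha=1+\log(\max_{j}\sum_i f^i(\{j\}))$: one must carefully bound both the first greedy marginal $M$ and the last positive marginal $m$ of $\bar f_{(k)}$, and justify the integrality/precision argument that simultaneously (i) guarantees $m\ge 1/n$, (ii) makes the bisection terminate with a \emph{strict} improvement over $k^\star$, and (iii) reconciles the cardinality threshold $\lvert\hat\S\rvert>\alpha K$ appearing in line~8 of the algorithm with the cost statement $c(\hat\S)\le\alpha B$ (immediate in the unit-cost case $c_j\equiv 1$, and otherwise requiring the cost-weighted greedy cover in place of the cardinality one). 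A secondary subtlety is verifying that normalization and monotonicity genuinely survive truncation and averaging, so that all invoked greedy guarantees apply unchanged.
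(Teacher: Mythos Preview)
Your plan matches the paper's own justification: Theorem~\ref{thm:ssa} is quoted from \cite{RSOS} rather than proved in full, and the surrounding discussion explains it exactly as you do---via the saturation characterization of $\bar f_{(k)}$ together with the greedy set-cover bound of Lemma~\ref{lemma:gpc} (Wolsey), which is precisely the integer-valued specialization of what you extract from \eqref{eq:dg-bound} after collapsing $M/m$ by integrality. Your write-up is more explicit than the paper's about the integrality hypothesis and the bisection bookkeeping, but the route is the same.
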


The above result relies on the following lemma concerning the \textsc{Greedy} algorithm \cite{wolsey}:

\begin{lemma}\label{lemma:gpc}
Given a submodular function $f$ over a ground set $\N$, and a feasible performance threshold value $k$, the \textsc{Greedy} algorithm produces a solution $\hat{\S}$ such that
\begin{equation}
    \dfrac{\lvert\hat{S}\rvert}{\lvert \S^\ast\rvert} \leq 1 + \log\max_{j\in \N}f(\{j\}),
\end{equation}
where $\S^\ast$ is an optimal solution (i.e., $\S^\ast = \argmin\{\lvert\S\rvert: f(\S) \geq k\}$).
\end{lemma}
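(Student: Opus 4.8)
The plan is to establish the classical greedy guarantee for submodular set covering (Wolsey), of which this lemma is the integer-valued, unit-cost specialization. Throughout I treat $f$ as integer-valued and normalized, as is standard for covering guarantees, and I write $\S^{(0)}\subseteq\S^{(1)}\subseteq\cdots$ for the chain of sets built by \textsc{Greedy}, with per-step gain $\gamma_t \vcentcolon= \max_{j}f_j(\S^{(t-1)}) = f(\S^{(t)})-f(\S^{(t-1)})$, stopping at the first index $L$ with $f(\S^{(L)})\geq k$, so that $\hat\S=\S^{(L)}$.

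The workhorse is a \emph{greedy progress inequality}. Fixing a step $t$ and writing $d_{t-1}\vcentcolon= k-f(\S^{(t-1)})$ for the residual deficit, monotonicity gives $f(\S^\ast)\le f(\S^\ast\cup\S^{(t-1)})$, and submodularity (the $w_f=1$ case of Proposition \ref{lem:curv}) gives $f(\S^\ast\cup\S^{(t-1)})-f(\S^{(t-1)})\le\sum_{o\in\S^\ast}f_o(\S^{(t-1)})\le\lvert\S^\ast\rvert\,\gamma_t$, since each per-element marginal is at most the greedy maximum $\gamma_t$. Combining with $f(\S^\ast)\ge k$ yields $\gamma_t\ge d_{t-1}/\lvert\S^\ast\rvert$: every greedy step closes at least a $1/\lvert\S^\ast\rvert$ fraction of the remaining deficit. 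This step is clean and is the whole reason submodularity enters.

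The second stage converts this per-step progress into the stated logarithmic bound. The coarse route — the deficit recursion $d_t\le d_{t-1}(1-1/\lvert\S^\ast\rvert)$ together with $d_{L-1}\ge 1$ (integer-valuedness forces the last positive marginal to be at least one) — already gives $\lvert\hat\S\rvert/\lvert\S^\ast\rvert\le 1+\log k$. To sharpen $k=f(\N)$ down to $\max_{j}f(\{j\})$, which is what the lemma asserts, I would run the Wolsey dual-fitting argument: relax the covering problem to a linear program, read the greedy prices $1/\gamma_t$ off the gains, and show that after dividing by $1+\log\max_{j}f(\{j\})$ the resulting assignment is dual-feasible; weak duality then bounds $\lvert\hat\S\rvert$ by $(1+\log\max_{j}f(\{j\}))$ times the LP optimum, which is at most the integral optimum $\lvert\S^\ast\rvert$. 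Equivalently, and more cheaply, one may simply invoke the already-stated guarantee \eqref{eq:dg-bound}: Lemma \ref{lemma:gpc} is its specialization to a truly submodular $f$ ($w_f=1$) with unit costs ($c(\S)=\lvert\S\rvert$), for which \eqref{eq:dg-bound} reads $\lvert\hat\S\rvert/\lvert\S^\ast\rvert\le 1+\log(M/m)$ with $M=\max_{j}f(\{j\})$ and $m$ the final marginal; integer-valuedness gives $m\ge 1$, hence $\le 1+\log\max_{j}f(\{j\})$.

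I expect the main obstacle to be exactly this sharpening: the elementary deficit recursion only sees the target value $f(\N)$, whereas the lemma's dependence on the single-element maximum $\max_{j}f(\{j\})$ is strictly stronger and requires either the per-optimal-element charging/dual-feasibility bookkeeping of Wolsey's analysis or the integrality fact that the last greedy marginal is at least one. Verifying dual feasibility — that no element accumulates charge exceeding $1+\log\max_{j}f(\{j\})$ across the run — is where submodularity and the non-increasing nature of the greedy marginals must be used carefully; everything before and after that step is routine.
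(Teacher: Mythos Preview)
The paper does not supply its own proof of Lemma~\ref{lemma:gpc}; it is quoted from Wolsey's work \cite{wolsey} and used as a black box in the discussion of SSA. So there is no ``paper's proof'' to compare against beyond the citation.

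That said, your proposal is correct and reconstructs precisely the classical argument behind the cited result. The greedy progress inequality $\gamma_t\ge d_{t-1}/\lvert\S^\ast\rvert$ is exactly the $w_f=1$, unit-cost instance of Proposition~\ref{lem:curv}, and your two routes to the logarithmic bound---the elementary deficit recursion and Wolsey's dual-fitting---are the standard ones. Your most economical observation is the last: Lemma~\ref{lemma:gpc} is the unit-cost, integer-valued specialization of \eqref{eq:dg-bound}, which the paper already records (again from \cite{wolsey1982analysis}); setting $c_j\equiv1$ and noting $m\ge 1$ for integer-valued $f$ immediately yields the claim. This is in fact how the paper implicitly situates the lemma, since both \eqref{eq:dg-bound} and Lemma~\ref{lemma:gpc} are attributed to the same source. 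Your identification of the integrality assumption (to force $m\ge 1$) as the hidden hypothesis is apt; the lemma as stated in the paper leaves this implicit.
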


With this result, it becomes clear how Theorem \ref{thm:ssa} provides this guarantee: since \textsc{Greedy} is guaranteed to return a solution $\hat{\S}$ such that $\lvert\hat{\S}\rvert/\lvert\S^\ast\rvert \leq 1 + \log\max_{j\in \N}f(\{j\})$, we may set our relaxation parameter $\alpha$ in SSA to exactly this upper bound on the right-hand side. Doing so, we know that if the \textsc{Greedy} subroutine fails to return a solution for a given performance threshold $k$, then that value of $k$ must be infeasible. Consequently, it only remains to find the highest $k$ value that \textit{is} feasible, which is achieved by a simple line search procedure. For the line search, the values for the interval to be searched at the current iteration are set in lines 1 and 2 and are updated after each run of the \textsc{Greedy} subroutine. If the \textsc{Greedy} subroutine fails to return a solution, we know that the most recent $k$ value provided was infeasible, so we update the higher end of our search interval to that value (line 9). If, on the other hand, the \textsc{Greedy} subroutine did return a solution, then the most recent $k$ value provided was feasible. This case does not preclude the possibility that there is yet another, higher $k$ value which is feasible, so we continue the line search by updating the lower end of our search interval to this value (line 11).

It may not be readily apparent how giving the average of the truncated $f^1, \ldots, f^n$, i.e., $\Bar{f}_{(k)}$ as input into the \textsc{Greedy} subroutine guarantees that the solution $\hat{\S}$ returned by \textsc{Greedy} satisfies $f^1(\hat{\S}), \ldots, f^n(\hat{\S}) \geq k$. It suffices to see that the average of these functions $f^1(\hat{\S}), \ldots, f^n(\hat{\S})$ truncated at $k$ is equal to $k$ if and only if $f^1(\hat{\S}), \ldots, f^n(\hat{\S}) \geq k$. In other words, even if a single $f^j(\hat{\S}) < k$, this would make its truncated counterpart $f^j_{(k)}(\hat{S}) < k,$ and hence even if for all other $i \neq j, f^i(\hat{\S}) \geq k$ holds, their average would not satisfy the performance threshold, making $\Bar{f}_{(k)} = \frac{1}{n}\sum_{i=1}^n f^i_{(k)}(\hat{\S}) < k$.

Before directly analyzing the use of DRG instead of \textsc{Greedy} as a subroutine in SSA, there yet remains a gap to be bridged between the setting of \cite{RSOS} and this work, that is, the consideration of weak-submodular functions instead of proper submodular functions. To derive a result similar to Theorem \ref{thm:ssa} in the case of weak-submodular functions, we need to demonstrate that averaging and truncation preserve weak submodularity as it preserves submodularity. This is indeed the case, as shown in the following two lemmas.

\begin{lemma}\label{thm:ws-sum}
Let $f^1, \ldots, f^n$ be weak submodular with WSCs $w_{f^1}, \ldots, w_{f^n} < \infty.$
Any nonnegative weighted sum $\Bar{f}$ of $f^1, \ldots f^n$ is also weak submodular, with $w_{\Bar{f}} < \infty$.
\end{lemma}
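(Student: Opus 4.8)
The plan is to bound the WSC of $\bar{f}$ directly in terms of the WSCs of the constituent functions, in fact establishing the clean inequality $w_{\bar f}\le\max_i w_{f^i}$. Write $\bar f=\sum_{i=1}^n a_i f^i$ with nonnegative weights $a_i\ge 0$. First I would dispatch monotonicity: since each $f^i$ is monotone nondecreasing and the weights are nonnegative, $\bar f$ is monotone nondecreasing, so the notion of its WSC is well-posed and it remains only to show the WSC is finite.

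The key structural observation is that the marginal-return operator is linear. For any $\S\subseteq\N$ and $j\in\N\setminus\S$,
\[
\bar f_j(\S)=\bar f(\S\cup\{j\})-\bar f(\S)=\sum_{i=1}^n a_i\bigl(f^i(\S\cup\{j\})-f^i(\S)\bigr)=\sum_{i=1}^n a_i f^i_j(\S).
\]
Next I would fix an arbitrary triple $(\S,\T,j)\in\tilde\N$ and apply Definition \ref{def:weak-sub} to each $f^i$ separately, which gives the pointwise bound $f^i_j(\T)\le w_{f^i}\,f^i_j(\S)$ for every $i$, all marginals being nonnegative by monotonicity. Multiplying each inequality by $a_i\ge0$ and summing yields
\[
\bar f_j(\T)=\sum_i a_i f^i_j(\T)\le\sum_i a_i\,w_{f^i}\,f^i_j(\S)\le\Bigl(\max_i w_{f^i}\Bigr)\sum_i a_i f^i_j(\S)=\Bigl(\max_i w_{f^i}\Bigr)\bar f_j(\S).
\]
Dividing through and maximizing over all triples then gives $w_{\bar f}=\max_{(\S,\T,j)\in\tilde\N}\bar f_j(\T)/\bar f_j(\S)\le\max_i w_{f^i}<\infty$, which is exactly the claim (this is essentially a mediant inequality, and the finite maximum over the $n$ functions is what keeps the bound finite).

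The one point that requires care — and what I would flag as the only real, if minor, obstacle — is the degenerate case in which the denominator $\bar f_j(\S)=0$, so that the ratio appearing in Definition \ref{def:weak-sub} is a priori ill-defined. Since every summand is nonnegative, $\bar f_j(\S)=0$ forces $a_i f^i_j(\S)=0$ for each $i$; for every index $i$ with $a_i>0$, the finiteness of $w_{f^i}$ then gives $f^i_j(\T)\le w_{f^i}f^i_j(\S)=0$, so $\bar f_j(\T)=0$ as well. Hence such triples yield only the indeterminate $0/0$ and may be excluded from the maximization — equivalently, the chain of inequalities above never actually divides by zero — leaving the bound $w_{\bar f}\le\max_i w_{f^i}$ intact and completing the argument.
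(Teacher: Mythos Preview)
Your argument is correct, and in fact the paper does not include a proof of this lemma at all --- it is stated without proof and followed only by the remark that it implies the mean of finitely many weak-submodular functions is again weak submodular. So there is nothing to compare against beyond noting that your proof would fill the gap cleanly.

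A couple of minor comments. First, your quantitative conclusion $w_{\bar f}\le\max_i w_{f^i}$ is strictly stronger than what the lemma asserts (mere finiteness), and it is worth stating as such since it is the natural bound one would want downstream. Second, your handling of the $0/0$ case is consistent with the convention the paper adopts explicitly in the proof of Lemma~\ref{thm:ws-truncation}; you might simply cite that convention rather than arguing around it. The only step worth making slightly more explicit is the passage from ``$w_{f^i}<\infty$'' to ``$f^i_j(\T)\le w_{f^i}f^i_j(\S)$ for every admissible triple'': when $f^i_j(\S)=0$, finiteness of $w_{f^i}$ forces $f^i_j(\T)=0$ as well (else the ratio is infinite), so the inequality holds trivially as $0\le 0$. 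You essentially say this in your final paragraph for $\bar f$, but it is also being used tacitly for each $f^i$ in the main chain.
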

In particular, note that this result implies that the mean of a finite number of weak-submodular functions is yet again a weak-submodular function. We now show a similar result for the truncation of a weak-submodular function.

\begin{lemma}\label{thm:ws-truncation}
Let $f$ be a weak-submodular function, i.e., $w_f < \infty.$ Then, the truncation of $f$ at $k \leq 0$, which is denoted $f_{(k)}(\S) \vcentcolon= \min\{f(\S), k\},$ with the convention that if $f_{(k)_i}(\T)/f_{(k)_i}(\S) = {0}/{0},$ then $f_{(k)_i}(\T)/f_{(k)_i}(\S) = 0,$ is a weak-submodular function with $w_{f_{(k)}} < \infty$.
\end{lemma}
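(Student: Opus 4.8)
The plan is to show directly that the truncation $f_{(k)} = \min\{f(\cdot),k\}$ is monotone nondecreasing and that its WSC satisfies $w_{f_{(k)}} \le w_f < \infty$, which is exactly the claim. Monotonicity is immediate: $x \mapsto \min\{x,k\}$ is nondecreasing, and $f$ is monotone nondecreasing, so their composition is too. The real work is to control the marginal returns of $f_{(k)}$ and show their pairwise ratios stay bounded by $w_f$. First I would compute, for any $\S$ and $j \notin \S$, using $f(\S\cup\{j\}) = f(\S) + f_j(\S)$ with $f_j(\S)\ge 0$, the closed form
\begin{equation}
(f_{(k)})_j(\S) =
\begin{cases}
\min\{f_j(\S),\, k - f(\S)\}, & \text{if } f(\S) < k,\\
0, & \text{if } f(\S) \ge k,
\end{cases}
\end{equation}
which in particular is always nonnegative, consistent with monotonicity.

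Next I would bound the ratio $(f_{(k)})_j(\T)/(f_{(k)})_j(\S)$ over every admissible triple $(\S,\T,j)\in\tilde{\N}$ for which the denominator is positive. Writing $a := f_j(\S) > 0$ and $b := k - f(\S) > 0$, I use two ingredients: the weak submodularity of $f$, which by Definition \ref{def:weak-sub} gives $f_j(\T) \le w_f\, f_j(\S) = w_f a$, and monotonicity, which gives $k - f(\T) \le k - f(\S) = b$. Hence, when $f(\T) < k$, $(f_{(k)})_j(\T) \le \min\{w_f a,\, b\}$, while $(f_{(k)})_j(\T) = 0$ when $f(\T)\ge k$. Comparing against $(f_{(k)})_j(\S) = \min\{a,b\}$ and checking the three orderings of $a$, $b$ and $w_f a$ shows $\min\{w_f a, b\}/\min\{a,b\} \le w_f$ in every case (using $w_f \ge 1$). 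Thus every ratio with a positive denominator is at most $w_f$.

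\textbf{Main obstacle.} The delicate point—and the reason the stated $0/0$ convention is required—is the degenerate case $(f_{(k)})_j(\S) = 0$, where I must rule out an unbounded $(\text{positive})/0$ ratio rather than merely compute one. I would split this into two sub-cases. If $(f_{(k)})_j(\S) = 0$ because $f(\S)\ge k$, then $f(\T)\ge f(\S)\ge k$ by monotonicity, so $(f_{(k)})_j(\T) = 0$ as well. If instead $f(\S) < k$ but $f_j(\S) = 0$, I invoke the hypothesis $w_f < \infty$: finiteness of the WSC of $f$ forces $f_j(\T) = 0$ for every $\T\supseteq\S$ (otherwise $f_j(\T)/f_j(\S)$ would be infinite, contradicting $w_f<\infty$), whence $(f_{(k)})_j(\T) = 0$ too. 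In both sub-cases the ratio is $0/0$, assigned the value $0$ by the stated convention. Combining the nondegenerate and degenerate cases, $w_{f_{(k)}}$, being the maximum of these ratios over $\tilde{\N}$, satisfies $w_{f_{(k)}} \le w_f < \infty$, so $f_{(k)}$ is weak submodular, as claimed.
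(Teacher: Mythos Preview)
Your proof is correct. Both you and the paper proceed by case analysis on saturation, but the organization differs enough to be worth noting. The paper argues at the level of the maximizing triple $(\S_0,\T_0,i_0)$: it enumerates which of the four sets $\S_0,\T_0,\S_0\cup\{i_0\},\T_0\cup\{i_0\}$ are saturated (i.e., have $f_{(k)}=k$), and in each configuration writes down the resulting ratio, obtaining a finite list $\mathcal{W}$ of candidate values for $w_{f_{(k)}}$ and checking each is finite. You instead first derive the closed form $(f_{(k)})_j(\S)=\min\{f_j(\S),\,k-f(\S)\}$ (or $0$ if $f(\S)\ge k$) and reduce the whole nondegenerate analysis to the single elementary inequality $\min\{w_f a,\,b\}/\min\{a,\,b\}\le w_f$, valid for $a,b>0$ and $w_f\ge 1$. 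This is cleaner and, unlike the paper's statement, makes the quantitative bound $w_{f_{(k)}}\le w_f$ explicit (the paper's case list implies the same bound but only asserts finiteness). Your treatment of the degenerate $0/0$ case---splitting into $f(\S)\ge k$ versus $f(\S)<k$ with $f_j(\S)=0$, and in the latter using $w_f<\infty$ to force $f_j(\T)=0$---matches the paper's handling of that sub-case.
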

\begin{proof}
By our definition of weak submodularity, any set function that is monotone nondecreasing and has a bounded WSC is weak submodular, so we first show that $f_{(k)}$ is monotone nondecreasing. For any pair of sets $\S \subseteq \T$ where $f(\S), f(\T) \leq k$, $f_{(k)}(\S) = f(\S) \leq f(\T) = f_{(k)}(\T)$. For any pair of sets $\S \subset \T$ where $f(\S) \leq k$ and $f(\T) > k$, $f_{(k)}(\S) = f(\S) \leq k = f_{(k)}(\T).$ Finally, for any pair of sets $\S \subseteq \T$ where $f(\S), f(\T) > k$, $f_{(k)}(\S) = k = f_{(k)}(\T)$. Hence, $f_{(k)}$ is monotone nondecreasing.

Now, let us recall the following definition: 
\begin{align}
w_{f_{(k)}} \vcentcolon= \max_{(\S, \T, i)\in \Tilde{\N}} f_{(k)_i}(\T) / f_{(k)_i}(\S).
\end{align}
Say that this maximum is attained at $(\S_0, \T_0, i_0).$ Due to the monotone nondecreasing nature of $f_{(k)},$ we have that
\begin{equation}
f_{(k)}(\S_0) \leq f_{(k)}(\T_0) \leq f_{(k)}(\T_0 \cup \{i_0\}),
\end{equation}
and
\begin{equation}
f_{(k)}(\S_0) \leq f_{(k)}(\S_0 \cup \{i_0\}) \leq f_{(k)}(\T_0 \cup \{i_0\}). 
\end{equation}
Let us say that a truncated function $f_{(k)}$ is \textit{saturated} at $\S$ if $f_{(k)}(\S) = k$. In light of these inequalities, the remainder of the proof reduces 
to checking whether or not $f_{(k)}$ is saturated at each of $\S_0, \T_0, \S_0 \cup \{i_0\},$ and $\T_0 \cup \{i_0\}$. For instance, if $f_{(k)}$ is saturated at $\S_0,$ we have, by convention, $w_{f_{(k)}} = {0}/{0} = 0 < \infty.$ 
If, on the other hand, $f_{(k)}$ is not saturated at $\S_0$ but at $\T_0,$ we have, $w_{f_{(k)}} = 0 < \infty.$ Investigating the rest of the valid configurations, we obtain
\begin{equation}
w_{f_{(k)}} \in \mathcal{W} \vcentcolon= \left\{0, \frac{k - f(\T_0)}{k - f(\S_0)}, \frac{k - f(\T_0)}{f_{{(k)}_{i_0}}(\S_0)}, w_f\right\}.
\end{equation}
It is important to note that $f_{(k)_{i_0}}(\S_0) = f_{(k)}(\S_0 \cup \{i_0\}) - f_{(k)}(\S_0)$ is obtained only in the case where $f_{(k)}$ is not saturated at $\S_0 \cup \{i_0\}$ (and hence at $\S_0$), which means that  $f_{(k)}(\S_0 \cup \{i_0\}) = f(\S_0 \cup \{i_0\})$ and $f_{(k)}(\S_0) = f(\S_0).$ Hence, it must be that $f_{(k)}(\S_0 \cup \{i_0\}) \neq f_{(k)}(\S_0),$ as otherwise, $(\S_0, \T_0, i_0)$ would have been a maximizer of $w_{f} \vcentcolon= {\max_{(\S,\T,i)\in \tilde{\Xs}}{f_i(\T)\slash f_i(\S)}},$ making $w_f = \infty,$ violating the premise that $w_f < \infty.$ We conclude, then, that every element of $\mathcal{W}$ is a finite value, i.e., $w_{f_{(k)}} < \infty.$
\end{proof}
With these two results, we are now ready to present \textsc{Randomized Weak Submodular Saturation Algorithm (Random-WSSA)}. To summarize, \textsc{Random-WSSA} uses the main body of SSA, with the \textsc{Greedy} subroutine replaced by its randomized variant DRG, and furthermore, works in the presence of weak-submodular functions. The full algorithm is presented in Algorithm \ref{alg:random-wssa}.

\begin{algorithm}[t]
\caption{\textsc{Randomized Weak Submodular Saturation Algorithm (Random-WSSA)}}
\label{alg:random-wssa}
\hspace*{\algorithmicindent}\textbf{Input:} Finite family of monotone nondecreasing weak-submodular functions $f^1, \ldots f^n$, ground set $\N$, budget bound $B\in\mathbb{R}_{\ge 0}$, cost function $c$, relaxation parameter $\alpha \geq 1$ \\
\hspace*{\algorithmicindent}\textbf{Output:} Solution set $\S$
\begin{algorithmic}[1]
\STATE $k_m \leftarrow 0$
\STATE $k_M \leftarrow min_{i\in[n]} f^i(\N)$
\STATE $\S \leftarrow \emptyset$
\WHILE{$k_M - k_m \geq 1/n$}
\STATE $k \leftarrow (k_M - k_m)/2$
\STATE Define $\Bar{f}_{(k)}(\S) := \frac{1}{n}\sum_{i=1}^n \min\{f^i(\S), k\}$
\STATE $\hat{\S} \leftarrow \operatorname{DRG}(\Bar{f}_{(k)}, k)$
\IF{$c(\hat{\S}) > \alpha B$}
\STATE $k_M \leftarrow k$
\ELSE
\STATE $k_m \leftarrow k$
\STATE $\S \leftarrow \hat{\S}$
\ENDIF
\ENDWHILE
\STATE \textbf{return} $\S$
\end{algorithmic}
\end{algorithm}

From the preceding results, it is straightforward to show how the theoretical guarantee for \textsc{Random-WSSA} is established. We formalize this result in the following theorem.
\begin{theorem}\label{thm:random-wssa}
Suppose the conditions of Theorem \ref{thm:dual} hold. Then, for any confidence parameter $0<\delta<1$, and for any budget constraint $B \in \mathbb{R}_{\geq0}$, \textsc{Random-WSSA} finds a solution $\hat{\S}$ to a relaxation of \eqref{robust}, where $f^i$ are weak submodular, i.e., are monotone nondecreasing with WSCs $w_{f^i} < \infty$ for all $i \in [n]$, such that with probability at least $(1-\delta)^P$,
\begin{equation}
\min_{i\in[n]} f^i(\hat{\S}) > \max_{c(\S)\leq B}\min_{i\in[n]} f^i(\S)
\end{equation}
and $c(\hat{\S}) \leq \alpha B,$ for 
\begin{equation}
\begin{split}
\alpha &= \dfrac{w_{\Bar{f}_{(k)}}}{\mu}\left[1+(L-1)\log(w_{\Bar{f}_{(k)}}) + \log \dfrac{M}{m}\right] + \frac{1}{\mu c(\S^\ast)}\sqrt{\frac{1}{2}\log\frac{1}{\delta}c^2(\hat{\S})},
\end{split}
\end{equation}
where $L \leq \lvert\N\rvert$ is the maximum number of iterations required by DRG until termination throughout all iterations of \textsc{Random-WSSA}, $P$ is the number of iterations required by \textsc{Random-WSSA} until termination, $c^2(\S) := \sum_{j\in \S} c_j^2, M = \max_{j\in \N, i\in[n]} f^i(\{j\}),$ and $m = \min_{j\in \N, i\in[n]} f^i_j(\N \setminus \{j\}).$
\end{theorem}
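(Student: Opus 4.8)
The plan is to mirror the proof of Theorem \ref{thm:ssa} for SSA, substituting the two structural ingredients that have changed relative to that setting: the objective functions are now only weak submodular rather than submodular, and the \textsc{Greedy} subroutine is replaced by its randomized counterpart DRG. Concretely, I would proceed in three stages — establishing that the truncated-and-averaged surrogate remains weak submodular so that DRG's guarantee applies, transferring DRG's cost bound into the relaxation factor $\alpha$ via the line-search correctness argument, and finally accounting for the randomness introduced by the $P$ invocations of DRG.

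First, I would verify that for every value of $k$ visited by the line search, the surrogate $\Bar{f}_{(k)}(\S) = \frac{1}{n}\sum_{i=1}^n \min\{f^i(\S), k\}$ is weak submodular with finite WSC. This is immediate from the two preceding lemmas: Lemma \ref{thm:ws-truncation} shows that each truncation $f^i_{(k)}$ is weak submodular, and Lemma \ref{thm:ws-sum} shows that their nonnegative average $\Bar{f}_{(k)}$ inherits a finite WSC $w_{\Bar{f}_{(k)}}$. This licenses the application of Theorem \ref{thm:dual} to each DRG call in line 7, yielding that DRG returns a set $\hat{\S}$ with $c(\hat{\S}) \leq \alpha\, c(\S^\ast)$, where $\alpha$ is exactly the expression in the statement, with probability at least $1-\delta$. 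I would further note that the global constants $M = \max_{j,i} f^i(\{j\})$ and $m = \min_{j,i} f^i_j(\N\setminus\{j\})$ dominate the corresponding quantities for every $\Bar{f}_{(k)}$, since truncation and averaging only shrink values; hence, after taking the worst-case WSC and worst-case iteration count $L$ over the line search, a single $\alpha$ serves as a uniform relaxation factor across all iterations.

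Second, I would run the SSA feasibility dichotomy with $\alpha$ set to this DRG cost bound. The key observation, identical in spirit to the SSA analysis, is that a threshold $k$ is \emph{feasible} — i.e. $k \leq \max_{c(\S)\leq B}\min_i f^i(\S)$ — exactly when there is a budget-$B$ set that saturates $\Bar{f}_{(k)}$, so that the optimal covering cost $c(\S^\ast)$ for threshold $k$ satisfies $c(\S^\ast) \leq B$; in that case Theorem \ref{thm:dual} forces $c(\hat{\S}) \leq \alpha B$ and the test in line 8 passes, retaining the solution. Contrapositively, whenever line 8 rejects (i.e. $c(\hat{\S}) > \alpha B$), the threshold must be infeasible. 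Since $\Bar{f}_{(k)}(\hat{\S}) = k$ is equivalent to $f^i(\hat{\S}) \geq k$ for all $i$, the retained solution meets every objective at its accepted threshold, and the bisection to granularity $1/n$, together with this saturation characterization, yields both the strict performance inequality and the $c(\hat{\S})\leq\alpha B$ cost bound exactly as in Theorem \ref{thm:ssa}.

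Third, and this is where I expect the only genuinely new obstacle relative to SSA to lie, I would handle the probabilistic guarantee. Theorem \ref{thm:dual} holds per DRG call with probability at least $1-\delta$, and the correctness of the feasibility dichotomy requires every one of the $P$ calls to meet its cost bound. Since the thresholds $k$ are chosen adaptively, the cleanest route is to condition on the history: regardless of the previously selected $k$ values, the fresh random samples drawn inside each DRG call make its guarantee hold with conditional probability at least $1-\delta$, so the chain rule gives $\Pr[\text{all } P \text{ calls succeed}] \geq (1-\delta)^P$, matching the claim. The point that must be argued with care is precisely this — that the per-call bound of Theorem \ref{thm:dual} is valid uniformly over the adaptively chosen threshold, so that the conditional probabilities multiply to $(1-\delta)^P$ rather than degrading to a looser union bound — together with confirming that simultaneous success of all $P$ calls is indeed sufficient for the line-search conclusions to hold.
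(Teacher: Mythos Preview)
Your proposal is correct and follows essentially the same route as the paper: invoke Lemmas \ref{thm:ws-sum} and \ref{thm:ws-truncation} to ensure each surrogate $\Bar{f}_{(k)}$ is weak submodular, apply Theorem \ref{thm:dual} to every DRG call, and multiply the per-call success probabilities across the $P$ iterations. Your treatment is in fact more detailed than the paper's (which is a one-line appeal to these results), and your conditioning argument for the $(1-\delta)^P$ bound is a touch more careful than the paper's bare assertion that the $P$ runs are ``statistically independent random events,'' since the thresholds $k$ are indeed adaptively chosen even though the internal sampling is fresh.
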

\begin{proof}
The proof follows immediately from Theorem \ref{thm:dual}, and Lemmas \ref{thm:mar}, \ref{thm:ws-sum}, \ref{thm:ws-truncation}. Note that since the DRG subroutine is run $P$ times throughout the execution of \textsc{Random-WSSA}, and that these runs are statistically independent random events, the high-probability guarantee $(1-\delta)$ is raised to the exponent $P$, to account for each of the $P$ times that DRG must succeed in returning a solution.
\end{proof}
To provide concrete values for the setting of $\alpha$, note that the upper bound in Theorem \ref{thm:dual} is given in terms of the quantities $c^2(\hat{\S})$ and $c(\S^*)$. One may upper-bound the first term instead by $c^2(\N)$ and lower-bound the second term instead by $1$. In case $c(\S^\ast) < 1$, one must only scale the cost value $c_j$ of every element $j\in \N$ and the budget $B$ by the factor $1/c_{\min}$, where $c_{\min} = \min_{j\in \N} c_j.$ Doing so ensures that for all $\S \subseteq \N, c(\S) \geq 1$, and in particular, $c(\S^\ast) \geq 1$, after which we can safely do the lower-bounding by $1$. However, in practical settings, one usually does not have to resort to such strategies, as experimental results demonstrate that \textsc{Random-WSSA} very frequently succeeds to construct solutions for the non-relaxed version of the problem, i.e., with $\alpha=1$.


\section{Numerical Examples}\label{sec:examples}

\begin{figure}[t]
     \centering
     \includegraphics[trim={2.0cm 2.75cm 1.5cm 3.0cm},clip,width=0.75\columnwidth]{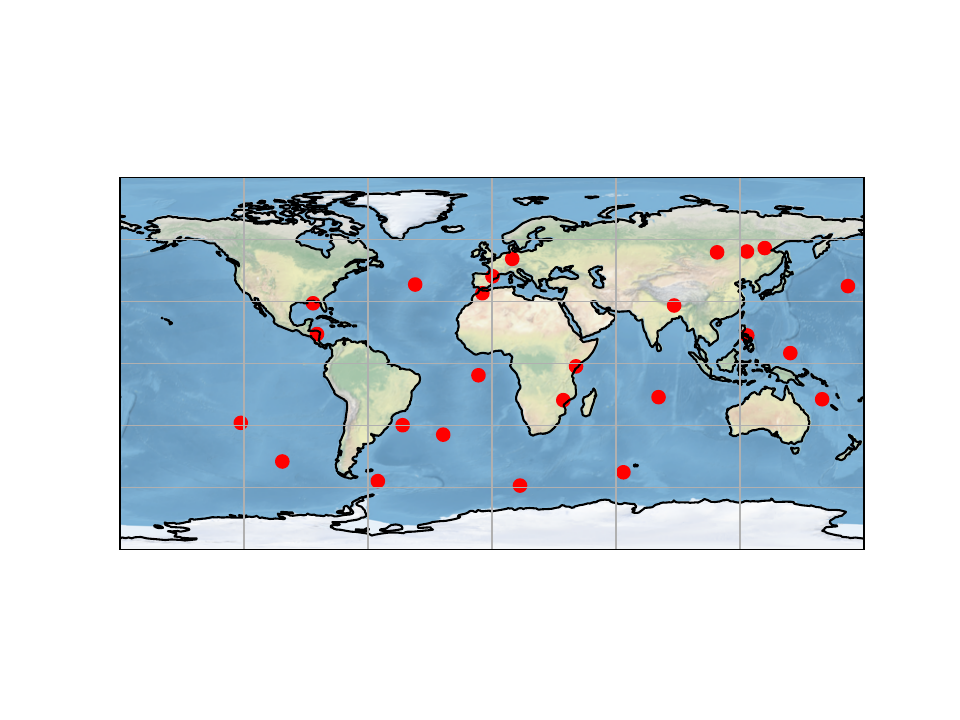}
    \caption{Locations of the $25$ randomly-selected atmospheric points of interest.}
    \label{fig:atm_points_of_interest}
\end{figure}
\begin{table}[t]
    \centering
    \caption{Average MRG solution time in seconds over the simulation horizon for various $r_{i}$ and budgets $B$. Note that $r_i =240$ corresponds to using the entire ground set, i.e., the standard \textsc{Greedy} algorithm.}
    \begin{NiceTabular}{|c||c|c|c|c|}
        \hline
        ${ }$ & $B = 25$ & $B = 50$ & $B = 75$ & $B = 100$ \\ \hline \hline
        $r_{i} = 60$ & $1.52$ & $1.59$ & $1.59$ & $1.58$  \\ \hline
        $r_{i} = 120$ & $2.93$ & $2.95$ & $3.03$ & $3.01$ \\ \hline
        $r_{i} = 180$ & $3.96$ & $3.96$ & $3.98$ & $3.96$ \\ \hline
        $r_{i} = 240$ & $4.33$ & $4.33$ & $4.39$ & $4.34$ \\ \hline
    \end{NiceTabular}
    \label{tab:average_sol_time_mrg_kf}
\end{table}
\begin{figure}
     \centering
     \begin{subfigure}[b]{\columnwidth}
         \centering
         \includegraphics[width=.9\textwidth]{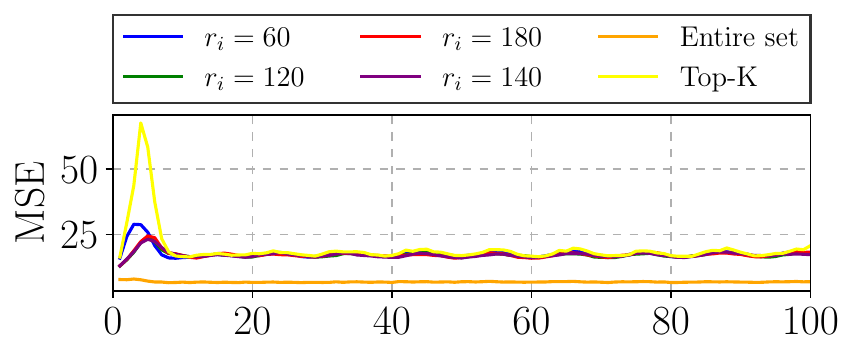}
     \end{subfigure}
     \begin{subfigure}[b]{\columnwidth}
         \centering
         \includegraphics[width=.9\textwidth]{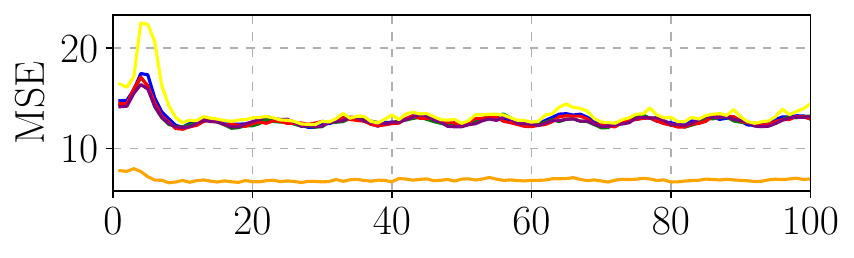}
     \end{subfigure}
     \begin{subfigure}[b]{\columnwidth}
         \centering
         \includegraphics[width=.9\textwidth]{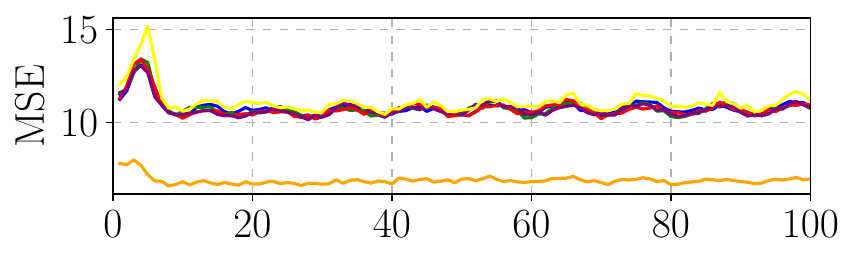}
     \end{subfigure}
     \begin{subfigure}[b]{\columnwidth}
         \centering
         \includegraphics[width=.9\textwidth]{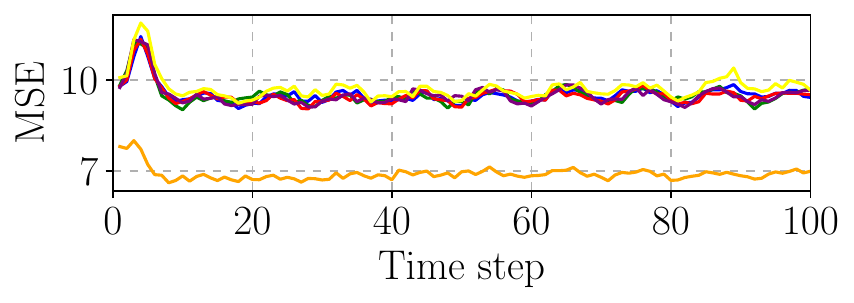}
     \end{subfigure}
    \caption{Total mean-square errors (MSE) over time for various $r_{i}$ and $B$. From top to bottom are the plots for $B=25$, $50$, $75$, and $100$, respectively. Performance of using the entire set of satellites as well as that of picking the highest elements in terms of marginal-gain-to-cost ratio with respect to the empty set until the budget constraint is met (which we call Top-$K$) are included as benchmark values.}
    \label{fig:various_MSE}
\end{figure}

\begin{table}[t]
    \centering
    \caption{Average MRG MSE values incurred over the simulation horizon for various $r_{i}$ and budgets $B$. Average MSE values for using the entire set and utilizing a Top-K approach are included as benchmarks. Note that $r_i =240$ corresponds to using the entire ground set, i.e., the standard \textsc{Greedy} algorithm.}  
    \begin{NiceTabular}{|c||c|c|c|c|}
        \hline
        ${ }$ & $B = 25$ & $B = 50$ & $B = 75$ & $B = 100$ \\ \hline \hline
        $r_{i} = 60$ & $17.73$ & $12.93$ & $10.78$ & $9.52$  \\ \hline
        $r_{i} = 120$ & $17.27$ & $12.82$ & $10.78$ & $9.52$ \\ \hline
        $r_{i} = 180$ & $17.33$ & $12.83$ & $10.75$ & $9.50$ \\ \hline
        $r_{i} = 240$ & $17.23$ & $12.81$ & $10.74$ & $9.51$ \\ \hline
        Entire set & $6.89$ & $6.89$ & $6.89$ & $6.89$
        \\ \hline
        Top-$K$ & $19.39$ & $13.52$ & $11.12$ & $9.74$
        \\ \hline
    \end{NiceTabular}
    \label{tab:average_mse_mrg}
\end{table}

We now corroborate our theoretical results on the proposed algorithms for tasks involving sensor selection in the context of LEO satellite constellations.
We consider a Walker-Delta constellation parameterized by $i:T/P/f$, where $i$ is the orbit inclination, $T$ is the number of satellites, $P$ is the number of unique orbits, and $f$ is the relative spacing between the satellites~\cite{walker1984satellite}.
The constellation altitude is $2000$ km, and the satellites are assumed to remain Earth-pointing with conical field-of-views of angle ${\pi}/{3}$.


\subsection{Sensor Selection for Atmospheric Monitoring with MRG}\label{subsec:kf_example}

Consider a LEO satellite constellation parameterized by $60^\circ:240/12/1$ performing atmospheric sensing over a set of $25$ randomly-instantiated points of interest, as shown in Figure~\ref{fig:atm_points_of_interest}.
We simulate the conditions at each point $\mathbf{x}_{p}(t)=[x_{p}(t),y_{p}(t),z_{p}(t)]^{\top}$ with the Lorenz 63 model~\cite{lorenz1963deterministic}, a simplified model of atmospheric convection, in which $x_{p}$ is proportional to the rate of convection while $y_{p}$ and $z_{p}$ are proportional to the horizontal and vertical temperature variation, respectively.
Precisely, the dynamics at each point are described by
\begin{align*}
    \dot{\mathbf{x}}_{p} = \begin{bmatrix} \dot{x}_{p} \\ \dot{y}_{p} \\ \dot{z}_{p} \end{bmatrix} = \kappa \begin{bmatrix} \sigma (y_{p}- x_{p}) \\ x_{p}(\rho - z_{p}) - y_{p} \\ x_{p} y_{p} - \beta z_{p} \end{bmatrix} + \omega_{p},
\end{align*}
where $\kappa, \sigma, \rho, \beta \in \mathbb{R}$. Note that we have omitted the dependence on time $t$ for notational clarity.
For this simulation, we choose values of $\kappa=0.005, \sigma = 10$, $\beta = \frac{8}{3}$, and $\rho = 28$. With this selection of parameters, the system exhibits chaotic behavior.
Furthermore, $\omega_{p} \sim \mathcal{N}([0,0,0]^{\top},\Sigma_{\omega_{p}})$ is a zero-mean Gaussian process modeling the noise. For this simulation, we set $\Sigma_{\omega_{p}} = 0.1I_{3}$.

We assign the $n$th satellite a cost $c_{n}$ uniformly at random from the interval $[1,2]$.
At each time step of the simulation, the decision-maker aims to make an optimal selection of a subset of the satellites to minimize the MSE of the estimated atmospheric reading, which is known to be a weak-submodular function~\cite{hashemi2017randomized}, while staying under a given budget constraint $B$.
In our setting, each time step of the simulation corresponds to $\Delta t = 60$ seconds of time in the simulation.
The budget constraint $B$ itself could practically be used to model several real-life limitations, e.g., communication costs between the satellites and the decision-maker or the operational cost of obtaining a reading from a specific satellite. In a real scenario, the cost values of each satellite should be tailored to the specific mission parameters.

The decision-maker uses an unscented Kalman filter~\cite{bar2004estimation} to estimate the atmospheric state at each point of interest, using the measurement model
\begin{align}\label{eq:observation_model}
    \mathbf{z}_{p,n}^{s} = \mathbf{1}_{p,n}^{s} \mathbf{x}_{p}(s \Delta t) + \nu_{p,n}^{s},
\end{align}
where $\mathbf{z}_{p,n}^{s} \in \mathbb{R}^{3}$ is the $n^{th}$ satellite's observation of point $p$ at step $s$, $\mathbf{1}_{p,n}^{s}$ is an indicator function equal to $1$ if $p$ is in the field-of-view of the $n^{th}$ satellite at step $s$ and $0$ otherwise, and $\nu_{p,n}^{s} \sim \N([0,0,0]^{\top},\Sigma_{\nu_{p}})$ is Gaussian measurement noise with  $\Sigma_{\nu_{p}} = 2 I_{3}$.
As demonstrated in~\cite{hashemi2017randomized}, the objective of minimizing the MSE of the estimator obtained from the selection of satellites can be directly modeled as maximizing a monotone nondecreasing weak-submodular performance objective.


We test several sampling set sizes $r_{i}$ and budgets $B$, simulating the estimation task over a horizon of $100$ time steps. At each time step, we update the satellite positions and atmospheric conditions. Subsequently, we run MRG at each time step to reconstruct a feasible selection of satellites to use for atmospheric reading. As benchmarks, we include two additional methods of selecting satellites. One consists of not making a selection at all and using the entire constellation of satellites at all times. This demonstrates the ideal scenario of the lowest achievable MSE value at each step, however, it also always incurs the highest cost possible, violating any meaningful constraint on the budget. Additionally, we include the method which we denote by Top-$K$. This entails sorting the entire set of satellites in decreasing order of marginal-gain-to-cost ratio and then sequentially adding them to the selection as long as they do not violate the budget constraint. 
%
%
%
Figure~\ref{fig:various_MSE} shows the resulting MSE throughout the simulation for different combinations of $r_{i}$ and $B$ values.
Table~\ref{tab:average_sol_time_mrg_kf} shows the average wall-clock computation time of MRG for each combination. Table~\ref{tab:average_mse_mrg} shows the average MSE values  over the entire time horizon of the simulation incurred by different methods for varying values of $r_{i}$ and $B$, effectively displaying the results in Figure~\ref{fig:various_MSE} in tabulated format.
As would be expected, increasing the budget value $B$ allows MRG to add additional satellites whose field-of-view may contain additional points of interest, increasing the total utility achieved, and hence, reducing the incurred MSE.
It is interesting to note, however, that although increasing the size $r_i$ of the sampling set at each iteration significantly increases the wall-clock computation time needed to construct solutions, the performance in terms of minimizing the MSE appears to be almost independent of this parameter. This effect is especially pronounced as the budget bound $B$ increases, allowing the randomized selection process more tolerance over the long run. Put another way, as the budget $B$ increases, we implicitly allow the selection process more lenience to make suboptimal selections and then to correct these with better selections with the more ample budget that still remains. This explains the quasi-independence of the performance of MRG from the size of the sampling set $r_i$, in high-budget scenarios.
Recall that the case where $r_{i}=240$ corresponds to the standard \textsc{Greedy} algorithm. It is clear to see the computational advantage of MRG over the full \textsc{Greedy} algorithm from Table \ref{tab:average_sol_time_mrg_kf}.
All of these observations demonstrate that, empirically, randomization significantly reduces algorithmic runtime while incurring only a modest loss in terms of performance. Furthermore, the more sophisticated approach of using a \textsc{Greedy} algorithm is also justified against using the simpler approach of Top-$K$, judging by the decrease in MSE values brought about by the introduction of greedy selection. This effect is especially prevalent in lower-budget regimes.


\begin{table}[t]
    \centering
    \caption{Average budget cost incurred by the satellite selection over the simulation horizon for various $r_{i}$ and coverage fractions $F$. Note that $r_i =240$ corresponds to using the entire ground set, i.e., the standard \textsc{Greedy} algorithm.}
    \begin{NiceTabular}{|c||c|c|c|}
        \hline
        ${ }$ & $F = 0.5$ & $F = 0.7$ & $F = 0.9$ \\ \hline \hline
        $r_{i} = 60$ & $63.08$ & $99.29$ & $161.74$ \\ \hline
        $r_{i} = 120$ & $62.64$ & $99.30$ & $161.47 $ \\ \hline
        $r_{i} = 180$ & $62.69$ & $99.24$ & $161.34$ \\ \hline
        $r_{i} = 240$ & $62.89$ & $99.10$ & $161.24$ \\ \hline
    \end{NiceTabular}
    \label{tab:average_budget_drg_eo}
\end{table}
\begin{table}
    \centering
    \caption{Average DRG solution time in seconds (Algorithm~\ref{alg:dg}) over the simulation horizon for various $r_{i}$ and coverage fractions $F$. Note that $r_i =240$ corresponds to using the entire ground set, i.e., the standard \textsc{Greedy} algorithm.}
    \begin{NiceTabular}{|c||c|c|c|}
        \hline
        ${ }$ & $F = 0.5$ & $F = 0.7$ & $F = 0.9$ \\ \hline \hline
        $r_{i} = 60$ & $0.53$ & $0.86$ & $1.37$ \\ \hline
        $r_{i} = 120$ & $1.08$ & $1.67$ & $2.64$ \\ \hline
        $r_{i} = 180$ & $1.60$ & $2.50$ & $3.64$ \\ \hline
        $r_{i} = 240$ & $2.00$ & $2.80$ & $3.96$ \\ \hline
    \end{NiceTabular}
    \label{tab:average_sol_time_drg_eo}
\end{table}
\begin{figure}
     \centering
     \begin{subfigure}[b]{\columnwidth}
         \centering
         \includegraphics[trim={2.0cm 2.75cm 1.5cm 3.0cm},clip,width=0.75\textwidth]{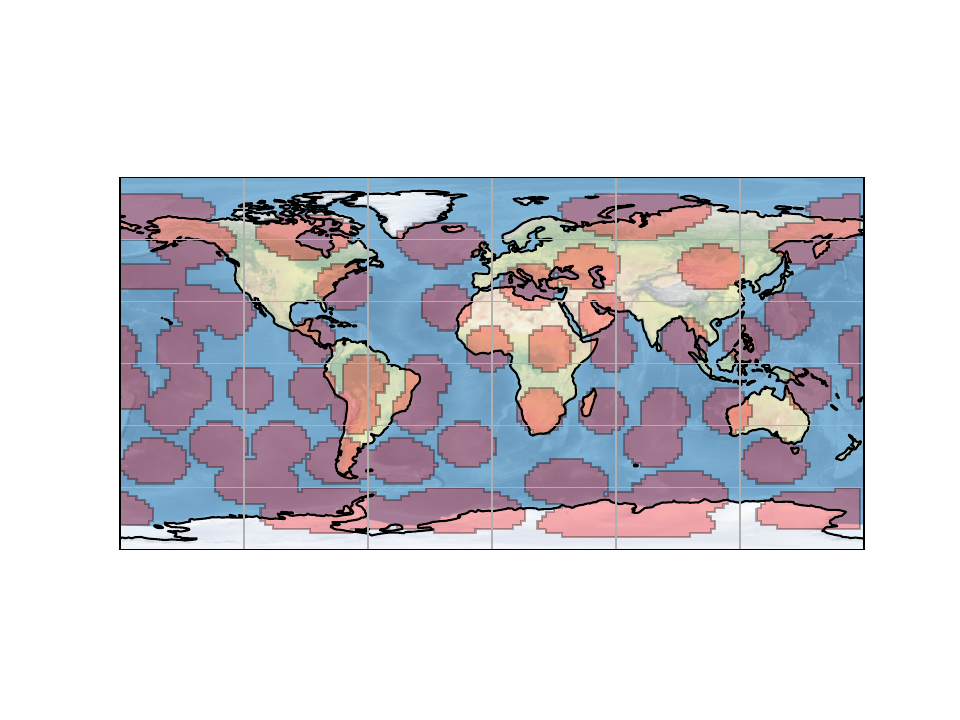}
         \caption{Ground coverage obtained for $r_{i}=120$ and $F=0.5$, time step $50$.}
         \label{fig:ri_120_CF_50_tstep_50}
     \end{subfigure}
     \begin{subfigure}[b]{\columnwidth}
         \centering
         \includegraphics[trim={2.0cm 2.75cm 1.5cm 3.0cm},clip,width=0.75\textwidth]{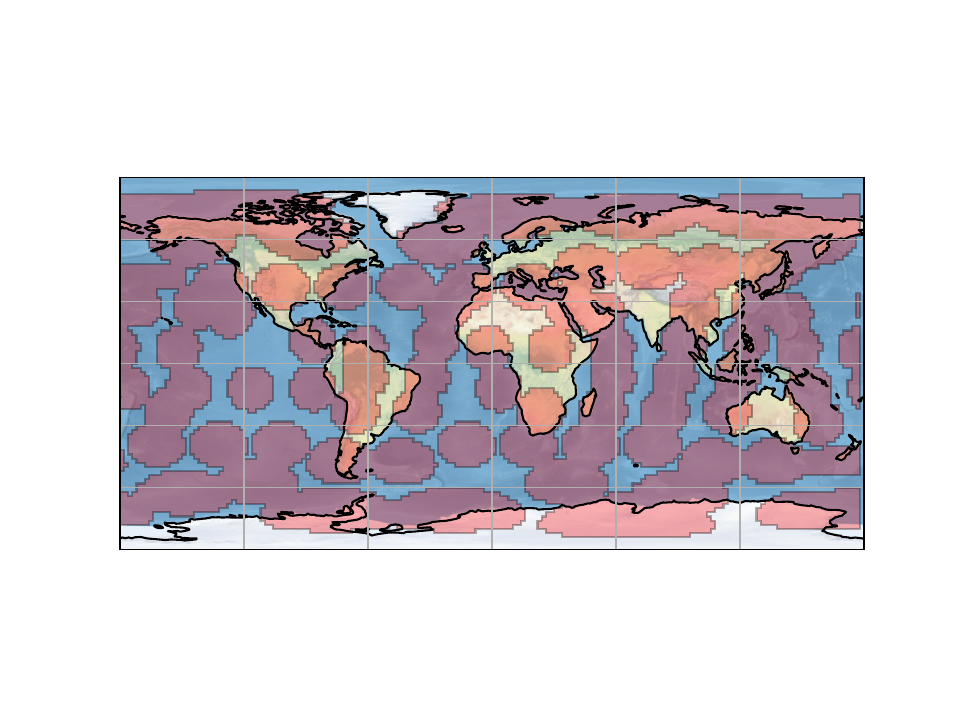}
         \caption{Ground coverage obtained for $r_{i}=120$ and $F=0.7$, time step $50$.}
         \label{fig:ri_120_CF_70_tstep_50}
     \end{subfigure}
     \begin{subfigure}[b]{\columnwidth}
         \centering
         \includegraphics[trim={2.0cm 2.75cm 1.5cm 3.0cm},clip,width=0.75\textwidth]{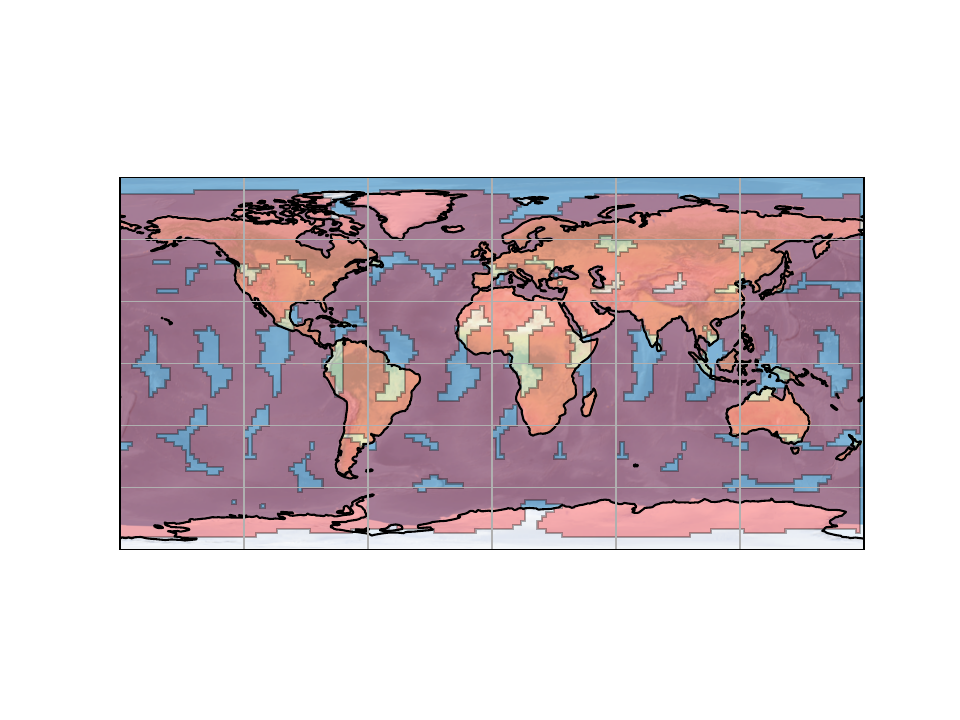}
         \caption{Ground coverage obtained for $r_{i}=120$ and $F=0.9$, time step $50$.}
         \label{fig:ri_120_CF_90_tstep_50}
     \end{subfigure}
        \caption{Visualization of the ground coverage areas obtained at time step $50$ through DRG using $r_{i}=120$ and various $F$.}
        \label{fig:various_ground_coverage}
\end{figure}

\subsection{Minimum Ground Coverage with DRG}\label{subsec:coverage quality}

We now consider a scenario where the goal of the decision-maker is to select a subset of satellites whose coverage area is at least a given fraction of the maximum achievable coverage area, which we denote by $F$ while minimizing the incurred cost of the selection.
This objective can naturally be expressed in terms of the performance-constrained problem~\eqref{eq:fmax:dual}.
For this simulation, we keep the same constellation parameters as in Section \ref{subsec:kf_example}, with the exception of adjusting the inclination of the constellation to $75^\circ$.

To calculate the area covered by a satellite, we first discretize Earth's surface into a grid of cells of width and height $2^\circ$.
The area of each cell is then calculated assuming a spherical Earth.
At each time step of the simulation, a cell's area is said to be covered by a satellite if the centroid of that cell happens to be contained inside the field of view of a satellite.
Straightforwardly, the marginal return of including a satellite in the selection is the additional area that would be covered by that satellite, that is not already covered by the other satellites in the selection.

We test varying combinations of $r_{i}$ and $F$ over a horizon of $100$ time steps, at each time step propagating the satellites in their orbit and subsequently running DRG to reconstruct a solution.
Figure~\ref{fig:various_ground_coverage} displays visualizations for the resulting ground coverage obtained using a sampling set size of $r_{i}=120$, along with varying $F$ values setting the coverage threshold.
Tables~\ref{tab:average_budget_drg_eo} and~\ref{tab:average_sol_time_drg_eo} respectively display the average budget cost incurred and the average wall-clock computation time that DRG takes for each combination of $r_{i}$ and $F$.
We observe from the results that increasing the size of the sampling set in Algorithm~\ref{alg:dg} generally reduces the average cost of the satellite selection, but only marginally.
However, it is clear from Table~\ref{tab:average_sol_time_drg_eo} that increasing the value of $r_{i}$ causes a significant increase in the wall-clock computation time needed for DRG to produce a solution.
The results of both Sections \ref{subsec:kf_example} and \ref{subsec:coverage quality} demonstrate the benefit of incorporating randomization into the standard \textsc{Greedy} algorithm, as nearly identical costs for the selection budget can be achieved with a much lower computation time.

\subsection{Multi-Objective Robustness with \textsc{Random-WSSA}}
\begin{figure}
     \centering
     \begin{subfigure}[b]{\columnwidth}
         \centering
         \includegraphics[width=.9\textwidth]{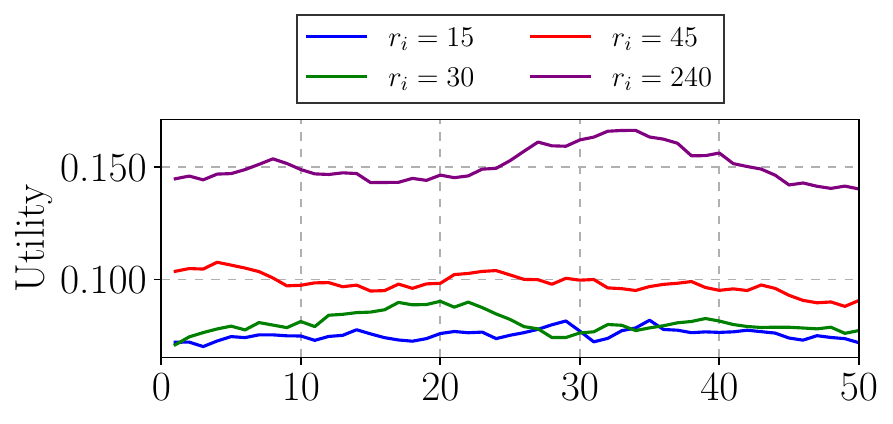}
     \end{subfigure}
     \begin{subfigure}[b]{\columnwidth}
         \centering
         \includegraphics[width=.9\textwidth]{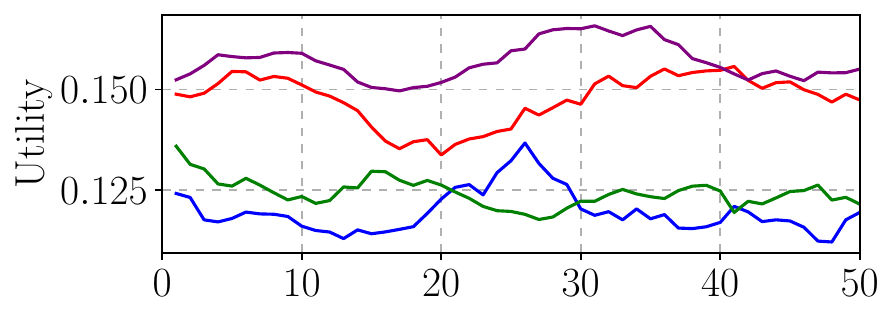}
     \end{subfigure}
     \begin{subfigure}[b]{\columnwidth}
         \centering
         \includegraphics[width=.9\textwidth]{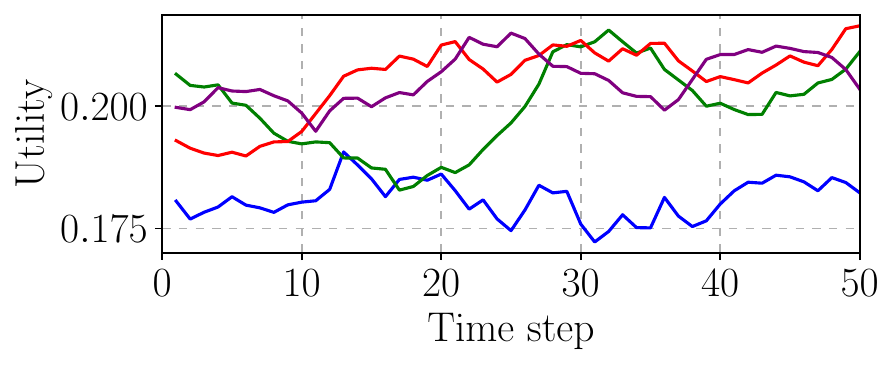}
     \end{subfigure}
    \caption{Total utility over time for various $r_{i}$ and $B$ for the \textsc{Random-WSSA}. From top to bottom are the plots for $B=10$, $15$, and $20$, and $100$, respectively. The results are put through a moving average filter with a window size of $10$. Note that $r_i =240$ corresponds to using the entire ground set, i.e., the standard SSA.}
    \label{fig:wssa_results}
\end{figure}
\begin{table}[t]
    \centering
    \caption{Average solution time in seconds for \textsc{Random-WSSA} for various $r_{i}$ and $B$. Note that $r_i =240$ corresponds to using the entire ground set, i.e., the standard SSA.}
    \begin{NiceTabular}{|c||c|c|c|}
        \hline
        ${ }$ & $B = 10$ & $B = 15$ & $B = 20$ \\ \hline \hline
        $r_{i} = 15$ & $39.91$ & $77.56$ & $132.80$ \\ \hline
        $r_{i} = 30$ & $84.17$ & $159.74$ & $275.86 $ \\ \hline
        $r_{i} = 45$ & $128.37$ & $265.39$ & $427.67$ \\ \hline
        $r_{i} = 240$ & $647.69$ & $1559.20$ & $2461.96$ \\ \hline
    \end{NiceTabular}
    \label{tab:wssa_times}
\end{table}
Lastly, we move on to experimentally corroborating the results we obtain in Section \ref{sec:robust}. To reiterate, we deal with a notion of robustness that is ubiquitous in submodular optimization~\cite{RSOS,ben2009robust,ben2002robust,NEURIPS2018_7e448ed9}, wherein we have multiple objectives and aim to produce a solution that performs well with respect to each, subject to a budget constraint. 

We instantiate a constellation with the same parameters as in the previous subsection, tasked with accomplishing six distinct tasks. The first five tasks $f^1, \ldots, f^5$ are atmospheric sensing tasks as in the experiments of Section \ref{subsec:kf_example}. As opposed to that scenario, each task instantiates five randomly located points on Earth. The utility values $f^1(\S), \ldots, f^5(\S)$ of these tasks are proportionate to the additive inverse of the mean-squared error (MSE) achieved by the selection $\S$ of satellites.

The sixth task, $f^6$, involves ground coverage of the Earth. The utility value $f^6(\S)$ of this task is proportionate to the Earth coverage achieved by the selected subset $\S$ of the satellites in the constellation. The area of coverage is determined based on the cell-grid structure explained in Subsection \ref{subsec:coverage quality}.

It is important to note that the utility of all six tasks $f^1, \ldots, f^6$ is normalized to the range of $[0, 1]$, by dividing the utility $f^i(\S)$ of a selection $\S$ on task $i$ at any time step by $f^i(\N)$, the maximum utility achievable by selecting the entire ground set at that time step. This ensures that the importance scores of individual tasks are not skewed by the arbitrary value of their utilities.

We run the simulation for $50$ time steps and make robust selections using SSA and \textsc{Random-WSSA} with various random sampling set sizes. Figure \ref{fig:wssa_results} shows the comparison in terms of the utility achieved by the two algorithms, where the case of $r_i=240$ corresponds to the standard SSA, and $r_i=15$, $r_i=30$ and $r_i=45$ correspond to running \textsc{Random-WSSA} with various sampling set sizes. The results indicate that \textsc{Random-WSSA} achieves comparable performance to SSA as the size of the sampling set increases, especially in lower-budget settings. However, as is the case with the MRG algorithm, as the budget $B$ increases, we observe that the dependence on $r_i$ weakens, as the increase in budget allows for more room to retroactively correct suboptimal selections. It is also worth noting that both algorithms are run with $\alpha=1$, i.e., the non-relaxed version of the problem in terms of the budget constraint. Nevertheless, they are consistently able to produce solutions at each time step of the simulation.

Table \ref{tab:wssa_times} demonstrates the solution times for \textsc{Random-WSSA} for various combinations of $r_i$ and $B$. It is clear, in this case, that \textsc{Random-WSSA} enjoys significantly better computational complexity in comparison to SSA, taking nearly twenty times as less time as SSA in some cases, whereas SSA takes longer than $10$ minutes on average for a single iteration even in the low-budget scenario of $B=10$. It is straightforward to conclude, then, that SSA is undesirable in time-critical conditions and that our proposed \textsc{Random-WSSA} is much better-suited to such scenarios.

\section{Conclusion}
We introduced three novel algorithms, namely the \textsc{Modified Randomized Greedy} (MRG), the \textsc{Dual Randomized Greedy} (DRG), and \textsc{Randomized Weak Submodular Saturation Algorithm (Random-WSSA)}. The first two of these algorithms aim to address weak-submodular maximization problems under budget and performance constraints, respectively. The last aims to achieve multi-objective robustness in the presence of performance constraints. Subsequently, we provided theoretical guarantees that hold with high probability on the performance of these algorithms. We illustrated the efficacy of the three algorithms through a practical scenario involving a sensors selection problem, where the sensors in question are satellites in a LEO constellation.

A clear avenue for potential research is the further investigation of the martingale assumption. One could explore whether domain-specific knowledge can enhance the tightness of the obtained bounds, or whether a guarantee that holds on expectation for DRG could be established, with the aim of using it in conjunction with a post-optimization-based method using multiple parallel instances of the algorithm to obtain a guarantee that holds with high probability.
\newpage
\bibliographystyle{ieeetr}
\bibliography{refs}
\end{document}